\documentclass[reqno]{amsart}

\title[ ]{Harmonic problems arising from continuous time random walks limit processes}

\author{Ivan Bio\v{c}i\'{c}}
\author{Bruno Toaldo}
\keywords{harmonic problems, continuous time random walks, non-local evolution equations, fractional kinetics, anomalous diffusions, semi-Markov processes}
	\date{\today}
	\subjclass[2020]{60K50, 60K15, 35R11, 31C05}
\thanks{The authors acknowledge financial support under the National Recovery and Resilience Plan (NRRP), Mission 4, Component 2, Investment 1.1, Call for tender No. 104 published on 2.2.2022 by the Italian Ministry of University and Research (MUR), funded by the European Union – NextGenerationEU– Project Title “Non–Markovian Dynamics and Non-local Equations” – 202277N5H9 - CUP: D53D23005670006 - Grant Assignment Decree No. 973 adopted on June 30, 2023, by the Italian Ministry of University and Research (MUR)}

\thanks{The author B. Toaldo would like to thank the Isaac Newton Institute for Mathematical Sciences, Cambridge, for support and hospitality during the programme Stochastic systems for anomalous diffusion, where work on this paper was undertaken. This work was supported by EPSRC grant EP/Z000580/1.}
%\usepackage[foot]{amsaddr}
% \address{$^{1}$Department of Mathematics “Giuseppe Peano”, University of Turin, Via Carlo Alberto 10, 10123 Turin, Italy.}

% \address{$^{2}$Department of Mathematics, Faculty of Science, University of Zagreb, Bijeni\v{c}ka Cesta 30, 10000 Zagreb, Croatia.}

\usepackage{amssymb, amsmath, amsfonts}
\usepackage{natbib}
\usepackage{lscape,comment,upgreek}
\usepackage{color}
\usepackage{mathtools}
\usepackage{dsfont}
\usepackage{mathrsfs}
\usepackage{bm}
\usepackage{graphicx}
\usepackage{wrapfig}
\usepackage{blindtext}
\usepackage{caption}
\usepackage{enumitem}
\usepackage{tikz}
\usetikzlibrary{arrows,decorations.markings,arrows.meta}
\usetikzlibrary{calc}

\usepackage{cancel}

\usepackage{soul}

\usepackage[pdftex,plainpages=false,colorlinks,hyperindex,bookmarksopen,linkcolor=red,citecolor=blue,urlcolor=blue]{hyperref}

\numberwithin{equation}{section}

%\pdfstringdefDisableCommands{\def\eqref#1{(\ref{#1})}}

\DeclareMathAlphabet{\mathpzc}{OT1}{pzc}{m}{it}

\newcommand{\R}{\mathbb{R}}
\newcommand{\N}{\mathbb{N}}
\newcommand{\pr}{\mathds{P}} %probability
\newcommand{\ex}{\mathds{E}} %expectation

\newcommand{\1}{\mathds 1}

 %operator
\newcommand{\II}{\mathds{I}}

\newcommand{\BB}{\mathcal{B}}

\newcommand{\DD}{\mathcal{D}}

\newcommand{\wt}{\widetilde}

\bibpunct{[}{]}{;}{n}{,}{,}
\usepackage{algorithm2e}
\usepackage{algpseudocode}
\RestyleAlgo{ruled}

\usepackage{bbm} %Indicator
\newtheorem{theorem}{Theorem}[section] %Theorems
\newtheorem{lemma}[theorem]{Lemma}
\newtheorem{proposition}[theorem]{Proposition}
\theoremstyle{remark}
\newtheorem{remark}[theorem]{Remark}
\newtheorem{example}[theorem]{Example}
\numberwithin{equation}{section} %# by Section
%Authors & affiliation
\usepackage{authblk}

%links
\usepackage{hyperref}
\hypersetup{
	colorlinks=true,
	%linkcolor=magenta,
}
%subfigures
\usepackage{caption}
\usepackage{subcaption}
\usepackage{graphicx}
%LineNumbers
\usepackage[pagewise]{lineno}
\usepackage{float} %for fixing positions of algorithms and figures

%%%SHORTCUTS%%%

 %RoundBrackets
 %SquareBrackets
 %CurlyBrackets

\DeclareMathAlphabet{\mathpzc}{OT1}{pzc}{m}{it}

\allowdisplaybreaks

% clickable labels for assumptions, e.g. (A1), (A2)
\newtheorem*{assumption*}{\assumptionnumber}
\providecommand{\assumptionnumber}{}
\makeatletter
\newenvironment{assumption}[2]
{%
	\renewcommand{\assumptionnumber}{(\textbf{#1#2})}%
	\begin{assumption*}%
		\protected@edef\@currentlabel{(\textbf{#1#2})}%
	}
	{%
	\end{assumption*}
}

\newcounter{mylabelcounter}
\newcommand{\labelText}[2]{%
#1\refstepcounter{mylabelcounter}%
\immediate\write\@auxout{%
  \string\newlabel{#2}{{1}{\thepage}{{\unexpanded{#1}}}{mylabelcounter.\number\value{mylabelcounter}}{}}%
}%
}

\makeatother

%opening

\begin{document}

\maketitle

\begin{abstract}
	In this paper, we develop a universal method that identifies the (non-local) governing evolution equations for Continuous Time Random Walks' (CTRWs) limit processes. Given one of these processes, our method provides the form of a non-local operator, acting on space and time variables jointly, such that the (generalized) harmonic problem associated with it represents an evolution governing equation for this process. Then, the well-posedness of this problem must be established case by case. In this paper, we establish well-posedness when the process is a Feller process (on a general Polish space $E$) time-changed with the overshooting of a subordinator. Also, we will show how our method applies to several cases when the equation and its well-posedness are already known, hence unifying several different approaches in the literature.
\end{abstract}

{\hypersetup{linkcolor=black}
% or \hypersetup{linkcolor=black}, if the colorlinks=true option of hyperref is used
\tableofcontents
}

\section{Introduction}
Continuous time random walks (CTRWs) are random walks with random waiting times $(W_n)_{n \in \mathbb{N}}$ separating random jumps $(J_n)_{n \in \mathbb{N}}$. These processes have been applied in the physics literature as they are a simple model for anomalous diffusion processes, i.e., they have a mean square displacement (variance) that behaves not linearly in time. Typically, the long waiting times (heavy-tailed distributions) imply subdiffusion while the long jumps imply superdiffusion (see \cite{METZLER20001} and references therein). For possible applications see, e.g., \cite{barkai0, barkai1, berkowitz2006modeling, fedotov1, fedotov2007migration,  fedotov3, fedotov2, gianni, raberto2002waiting, Scher1973a, Scher1973b, Scher1975, schumer2003fractal}. Even though the dependence between $(W_n)_n$ and $(J_n)_n$ is allowed by the theory, it is typical in statistical physics to consider CTRWs where the random variables $W_n$ and $J_n$ do not depend on the past $W_k$, $J_k$, $k=1, \dots, n-1$. However, since the distribution of the waiting times is, in general, not an exponential random variable, CTRWs are not, in general, Markov processes as they do not enjoy the lack of memory of the exponential distribution (they enjoy the semi-Markov property, see \cite[Chapter 3]{gikhman2004theory} for details).

The scaling limits of continuous time random walks are time-changed Markov processes \cite{kobayashi2011stochastic, kolokoltsov2009generalized, Meerschaert2004, straka2011lagging}; typically these processes are not Markovian as the random time-change induces intervals of constancy with arbitrary distribution; however, they can be embedded in a finite-dimensional Markov process \cite{Meerschaert2014}. One of the main tools for the analysis of the distribution of these processes, which is relevant for applications, is their governing equation, i.e., an evolution equation for their one-dimensional marginal. This equation is non-local in the time variable and reduces to a time-fractional equation in particular cases. There is a huge literature on this aspect; the goal is to find an evolution integro-differential equation for the function $q(x,t) \coloneqq \mathds{E}^xu(X_t)$ where $X_t$ is the scaling limit of a CTRW (as, for example, in \cite{ascione2021time, Baeumer2001, mirko, chen, hernandez2017, kochubei, kolokoltsov2015,  meerschaert2002governing, savtoa, STRAKA2018451, Ton19jmaa}). Despite the literature being very big, most of the results are dedicated only to the uncoupled case, i.e., when $J_n$ and $W_n$ are independent: this induces, in the limit, Markov processes time-changed with independent inverse subordinators. A fully coupled case was recently considered in \cite{ascione2024}. 
A more unifying theory is proposed in \cite{baeumer2017fokker} where the authors found a general integro-differential equation satisfied by CTRWs' limit processes. However, the equation is not an evolution equation on variables $(x,t)$ for the function $q(x,t)$, with an initial condition $q(x,0)=u(x)$; it is indeed an integro-differential equation for a functional of $q(x,t)$ where a further variable, related to the time-change, appears (also the corresponding forward equation is considered). From this approach, the function $q(x,t)$ can be obtained only in the limit sense.

Our aim in this paper is to provide a unifying general approach to this problem. We will construct a ``universal algorithm" that identifies an integro-differential equation for $q(x,t)$, having the form of the (generalized) harmonic problem
\begin{align}
    \mathfrak{A}q(x,t) \, &= \, 0, \qquad t>0,  \label{opintro}\\
    q(x,0)\, &= \, u(x), \notag 
\end{align}
where $\mathfrak{A}$ is a non-local operator acting on the variables $x$ and $t$ and $u(x)$ is a suitable initial condition (which is done in Section \ref{sec:harmonic}).
Our approach is general and can be applied to any limit process (in the class proposed in \cite{Meerschaert2014}): it provides the form of the operator in \eqref{opintro} but the well-posedness of the corresponding problem must be dealt with case by case. In this paper we show how to deal with the well-posedness in a situation when the governing equation for $q(x,t)$ was unknown, i.e., for a Feller process $(M_t)_{t\ge0}$ on some Polish space $E$, time-changed with the overshooting process $(D_t)_{t\ge0}$ of a subordinator (which is done in Section \ref{sectionmain}). We observe that our analysis works when the vector variable $x$ is a general Polish space $E$, and not only for $x \in \mathbb{R}^d$. Then we also show how our approach applies to well-known cases (see Section \ref{sec:ex}) and thus we re-obtain, using our universal methodology, fractional-type equations studied in previous literature and also the coupled case considered in \cite{ascione2024}.

% \section{Preliminaries}
% \ivan{TO BE WRITTEN}

\section{Harmonic functions approach for time non-local equations}
\label{sec:harmonic}
In this section, we introduce the details of our approach to understand the governing equation of a time-changed Markov process arising as a limit of a CTRW. We first recall the foundation of the CTRWs limit theory.
\subsection{Scaling limits of CTRWs and time-changed Markov processes}
Let $c>0$ be a scaling parameter and define the following discrete-time Markov chain in $\mathbb{R}^d \times \R$:
\begin{align}
    (S_n^c, T_n^c) \, = \, (A_0^c, \sigma_0^c) + \sum_{k=1}^n (J_k^c, W_k^c).
\end{align}
The random variables $S_n^c$, $n\in \N$, represent the position of a particle after $n$ jumps, and $T_n^c$, $n\in\N$, are the times when the particle arrives in those positions (the epochs). Let
\begin{align}
    N^c_t\coloneqq \, \max \{ n \in \mathbb{N}: T_n^c \leq t  \}.
\end{align}
The CTRW is then defined as
\begin{align}
    X^c_t \coloneqq S^c_{N^c_t}
\end{align}
while the Overshooting CTRW (OCTRW) as
\begin{align}
    Y^c_t \coloneqq S^c_{N^c_t+1}.
\end{align}
The sequence $(J_k^c,W_k^c)$, $k\in\N$, is often assumed to be i.i.d. as they represent the motion of a jumping particle in the homogeneous in time and space case, however this does not mean that $J_k^c$ and $W_k^c$ have to be independent. The (O)CTRW is said to be coupled if $J_k^c$ and $W_k^c$ are dependent. The difference between the CTRW and the OCTRW stays in the dependence structure between jumps and intervals of constancy: for a CTRW the intervals of constancy are dependent on the subsequent jump, while for the OCTRW they depend on the preceding jump. Use the symbol $\Longrightarrow$ to denote the weak convergence of probability measures on $D[0, +\infty)$, the space of càdlàg functions, in the $J_1$ Skorohod topology, and suppose that
\begin{align}
    (S_{[cu]}^c, T_{[cu]}^c) \Longrightarrow (A_u, \sigma_u),\quad \text{as $c \to +\infty$},
    \label{asslimitctrw}
\end{align}
where $(A_u, \sigma_u)_u$ is a canonical Feller process on $\mathbb{R}^{d+1}$ on the filtered family of probability spaces $(\Omega, \mathcal{F}, \mathcal{F}_u, \mathds{P}^{(x,v)})$, in the sense of \cite[III.2]{revuz2013continuous}, with the second coordinate $\sigma_u$ which is strictly increasing and unbounded $\mathds{P}^{(x,v)}$-a.s. for any $(x,v) \in \mathbb{R}^{d+1}$. In this case, by \cite[Theorem 3.6]{straka2011lagging}\footnote{The article \cite{straka2011lagging} assumes that $(A_u,\sigma_u)$ is a L\'evy process, but this assumption is irrelevant for obtaining Theorem 3.6 which proof holds even in the case when $(A_u, \sigma_u)$ is a Feller process with strictly increasing and unbounded second coordinate.}, as $c \to +\infty$ it holds that
\begin{align}
  &  X_t^c  \Longrightarrow (A_{L_t-} )^+ \coloneqq \lim_{\delta\downarrow 0 } \lim_{h\uparrow 0} A_{L_{t+\delta}+h} \label{ctrwlim} \\
  & Y_t^c  \Longrightarrow  A_{L_t} \label{octrwlim}
\end{align}
where, for $t \in \mathbb{R}$,
\begin{align}
    L_t \coloneqq \inf \{ s >0 : \sigma(s)>t \}.
    \label{inverse}
\end{align}
In applications in statistical physics, it is typical (see for example \cite{henry2010fractional, Meerschaert2014} and references therein) that the process $(A_u, \sigma_u)_u$ is a jump-diffusion, i.e., the associated Feller semigroup admits a generator $(\mathcal{A}, \mathcal{D}(\mathcal{A}))$ such that it has a Courrège-type form
\begin{align}
    &\mathcal{A}f(x,t)=\sum_{i=1}^d b_i(x,t) \partial_{x_i} f(x,t) + \gamma(x,t) \partial_t f(x,t) + \frac{1}{2} \sum_{1 \leq i, j \leq d} a_{ij}(x,t) \partial_{x_ix_j}^2f(x,t) \notag \\
    &+ \int \left( f(x+y, t+w) - f(x,t)-\sum_{i=1}^d y_i \mathds{1}_{[(y,w) \in [-1,1]^{d+1}} \partial_{x_i} f(x,t)\right) K(x,t;dy,dw),
    \label{genctrwlim}
\end{align}
on some suitable operator core for $\mathcal{A}$. The kernel $K(x,t; \cdot, \cdot)$ is a measure on $\R^{d+1}$ and is called a jump kernel. In our case it is supported on $\mathbb{R}^d \times [t, +\infty)$ since $\sigma_u$, $u \geq 0$, is strictly increasing. The real-valued coefficients $b,\gamma,a_{ij}$ are typically assumed to satisfy Lipschitz and linear growth conditions to induce a solvable  SDE (see, e.g., \cite[Section 6.2]{applebaum}).

\subsection{Governing equations and generalized harmonicity}
Our aim is to obtain governing equations for the processes $(A_{L_t})_{t\ge0}$ and $\big((A_{L_t-})^+\big)_{t\ge0}$, defined as in \eqref{ctrwlim} and \eqref{octrwlim}, in full generality. Let now $(A_u, \sigma_u)_u$ be a general Feller process (possibly sub-Markovian) on $E\times \R$, with the corresponding family of probability measures $\pr^{(x,v)}$, such that the second coordinate $\sigma_u$ is strictly increasing, and where $E$ is locally compact separable metric space.

Note that the random time $L_t$ defined as in \eqref{inverse} is the exit time of the Markov process $(A_u, \sigma_u)_u$, from the open set $E \times (-\infty, t)$. Hence, one could expect that the process stopped at $L_t$ has a harmonic property, i.e., the equation $\mathcal{A}q(x,v)=0$, subject to $q(x,v)=u(x,v)$ for $(x,v) \in E \times [t, +\infty)$ is satisfied by $q(x,v) =\mathds{E}^{(x,v)}u(A_{L_t}, \sigma_{L_t})$. However, this is not an evolution as desired, since it is in the variables $(x,v)$. We instead want a governing equation acting on the space-time variables $(x,t)\in E\times (0,\infty)$ and with a suitable initial condition on $E\times \{0\}$. Therefore, we modify the previous classical approach as follows. \footnote{For the reader's convenience, it is helpful to study first the case when $(A_u,\sigma_u)_{u\ge0}$ is Markov additive, presented in the next subsection, which has much less technical construction.}

Let  $p_{u}(x,v;dy,dw)$ denote the transition functions of the canonical Feller process $(A_u, \sigma_u)_u$. For each $t\ge0$, define the following transition functions on the state space $E\times[0,+\infty)$ equipped with the Borel sigma-algebra $\mathcal{B}$: 
\begin{align}
 \,^t{p}_{u}^+(x,v;dy,dw) &\coloneqq \mathds{1}_{[0<w\leq v]}  p_{u}(x,t-v;dy,t-dw)+
 \delta_0(dw)\,^t\upvarpi^+(x,t-v,dy), \label{transitionover} \\
 \,^t{p}_{u}^-(x,v;dy,dw) &\coloneqq \mathds{1}_{[0<w\leq v]}  p_{u}(x,t-v;dy,t-dw)+\delta_0(dw)\,^t\upvarpi^-(x,t-v,dy).
 \label{transitionunder}
\end{align}
Here, $\,^t\upvarpi^+(x,t-v,dy)\coloneqq \pr^{(x,t-v)}\left( A_{L_t}\in dy, \sigma_u\in[t,\infty))\right)$ and $\,^t\upvarpi^-(x,t-v,dy)\coloneqq \pr^{(x,t-v)}\left( A_{L_t-}\in dy, 
\sigma_u\in[t,\infty))\right)$.

By this construction, by putting $v=t$, we get
\begin{align}
    ^t p^+_u(x,t;dy,dw) & =  \mathds{1}_{[w>0]} \mathds{P}^{(x,0)} \left( A_u \in dy, t-\sigma_u \in dw  \right) \notag \\ &\hspace{4em}+\mathds{P}^{(x,0)} \left( A_{L_t} \in dy, L_t \leq u \right)\delta_0(dw), \\
     ^tp^-_u(x,t;dy, dw)&= \mathds{1}_{[w>0]} \mathds{P}^{(x,0)} \left( A_u \in dy, t-\sigma_u \in dw \right) \notag \\ &\hspace{4em}+ \mathds{P}^{(x,0)} \left( A_{L_t-} \in dy, L_t \leq u \right) \delta_0(dw).
\end{align}
It can be checked that for each $t\ge 0$, the kernels $\,^tp^{\pm}$ satisfy the Chapman-Kolmogorov property, thus forming (sub-)Markov transition functions.
Now, for each $t\ge 0$, we can define the processes $^tZ^+=(^tA_u^+, \,^t\sigma_u^+)_u$, and $^tZ^-=(^tA_u^-, \,^t\sigma^-_u)_u$, as canonical processes (as in \cite[III.2]{revuz2013continuous}) with the state space $E \times [0, +\infty)$ and the transition densities $^tp^+$ and $^tp^-$, respectively. We will denote the corresponding families of probability measures by $^t\mathds{P}_{\pm}^{(x,v)}(\cdot)$, $(x,v) \in E \times [0, +\infty)$.

A  description of the processes $^tZ^+$ and $^tZ^-$ is the following. When the processes $^tZ^+$ and $^tZ^-$ start from $(x,t)$ and when $(A_u, \sigma_u)$ starts from $(x,0)$, then one has (in the sense of finite-dimensional distributions) that
\begin{align}
    ^tZ^+_u\, = \, \begin{cases}
        (A_u, t-\sigma_u), \qquad & u < L_t, \\ (A_{L_t},0), & u \geq L_t,
    \end{cases}, \quad ^tZ^-_u = \begin{cases}
        (A_u, t-\sigma_u), \qquad & u < L_t, \\ (A_{L_t-},0), & u \geq L_t.
    \end{cases} 
\end{align}
Since we want to obtain a governing equation for the processes defined in \eqref{ctrwlim} and \eqref{octrwlim}, by defining
\begin{align}\label{exittime}
    \tau^{\pm}_0 \coloneqq \inf \{ u \geq 0 : \,^tZ^{\pm}_u \in E \times \{ 0 \} \},
\end{align}
we have,
\begin{align*}
   & \mathds{P}^{(x,0)} \left( A_{L_t} \in dy \right) =  \,^t\mathds{P}_+^{(x,t)} \left( \,^tA_{\,\tau_0^+}^+ \in dy \right),\\
   & \mathds{P}^{(x,0)} \left( A_{L_t-} \in dy \right) =  \,^t\mathds{P}_-^{(x,t)} \left( \,^tA_{\,\tau_0^-}^- \in dy \right).
\end{align*}

By following the arguments above, we obtain the following statement.
\begin{theorem}\label{t:1151}
    Let $(A_u, \sigma_u)_u$ be a Feller process as above, with the transition probabilities $p_{u}(x,v;dy,dw)$. Then, for any $t\ge0$, there exist two Markov processes $\,^tZ^+=(^tA_u^+, \,^t\sigma_u^+)_u$ and $\,^tZ^-=(^tA_u^-, \,^t\sigma_u^-)_u$, on the canonical probability space with measures $\,^t\mathds{P}_\pm^{(x,t)}$, with the transition probabilities 
    \begin{align*}
 \,^t{p}_{u}^+(x,v;dy,dw) &\coloneqq \mathds{1}_{[0<w\leq v]}  p_{u}(x,t-v;dy,t-dw)+
 \delta_0(dw)\,^t\upvarpi^+(x,t-v,dy), \\
 \,^t{p}_{u}^-(x,v;dy,dw) &\coloneqq \mathds{1}_{[0<w\leq v]}  p_{u}(x,t-v;dy,t-dw)+\delta_0(dw)\,^t\upvarpi^-(x,t-v,dy),
\end{align*}
respectively, where $\,^t\upvarpi^+(x,t-v,dy)\coloneqq \pr^{(x,t-v)}\left( A_{L_t}\in dy, \sigma_u\in[t,\infty))\right)$ and $\,^t\upvarpi^-(x,t-v,dy)\coloneqq \pr^{(x,t-v)}\left( A_{L_t-}\in dy, 
\sigma_u\in[t,\infty))\right)$, such that 
\begin{align}
   & \mathds{P}^{(x,0)} \left( A_{L_t} \in dy \right) =  \,^t\mathds{P}_+^{(x,t)} \left( \,^tA_{\,\tau_0^+}^+ \in dy \right), \label{317}\\
   & \mathds{P}^{(x,0)} \left( A_{L_t-} \in dy \right) =  \,^t\mathds{P}_-^{(x,t)} \left( \,^tA_{\,\tau_0^-}^- \in dy \right). \label{318}
\end{align}
where $\tau^{\pm}_0 \coloneqq \inf \{ u \geq 0 : \,^tZ^{\pm}_u \in E \times \{ 0 \} \}$.
\end{theorem}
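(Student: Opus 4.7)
The plan is to verify in succession that (i) the formulas define (sub-)Markov transition kernels on $E\times[0,\infty)$, (ii) they satisfy Chapman--Kolmogorov, (iii) the canonical construction of Revuz--Yor III.1--III.2 then yields the processes $\,^tZ^\pm$ with the prescribed semigroup, and (iv) the coupled pathwise description given before the statement immediately delivers \eqref{317}--\eqref{318}. Non-negativity and measurability of $\,^tp_u^\pm$ are inherited from the Feller kernel $p_u$ and from the measurability of the maps $(x,v)\mapsto\,^t\upvarpi^\pm(x,t-v,\cdot)$, while the sub-probability bound follows from
\[
\,^tp_u^\pm(x,v;E\times[0,\infty))=\mathds{P}^{(x,t-v)}(\sigma_u<t)+\mathds{P}^{(x,t-v)}(L_t\le u)\le 1,
\]
using that $\{\sigma_u<t\}=\{L_t>u\}$, so the two events are complementary up to sub-Markov loss. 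At the boundary $v=0$ one has $L_t=0$ a.s.\ under $\mathds{P}^{(x,t)}$, whence $\,^t\upvarpi^\pm(x,t,dy)=\delta_x(dy)$; this makes $E\times\{0\}$ an absorbing cemetery for the constructed processes, consistent with the informal description preceding the statement.

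The core of the argument is the Chapman--Kolmogorov identity
\[
\,^tp_{u+s}^\pm(x,v;dy,dw)=\int_{E\times[0,v]}\,^tp_u^\pm(x,v;dy',dw')\,\,^tp_s^\pm(y',w';dy,dw),
\]
which I would verify by separating the cases $w>0$ and $w=0$. On $\{w>0\}$ the Dirac parts drop out on both sides and, via the affine change of variables $v\mapsto t-v$, the identity reduces to the Chapman--Kolmogorov equation for the underlying Feller kernel $p$ restricted to $\{\sigma<t\}$. On $\{w=0\}$ one must show that the two disjoint ways of being absorbed by time $u+s$ --- either absorbed already within $[0,u]$, or surviving to time $u$ in some interior state $(y',t-w')$ with $w'\in(0,v]$ and then absorbed within $(u,u+s]$ --- add up correctly. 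This is exactly the (ordinary) Markov property of $(A_u,\sigma_u)$ applied at the deterministic time $u$, decomposing $\{L_t\le u+s\}$ into $\{L_t\le u\}\cup\{u<L_t\le u+s\}$ and using the definition of $\,^t\upvarpi^\pm$ to express the distribution of $A_{L_t}$ (respectively $A_{L_t-}$) consistently on each piece. I expect this bookkeeping to be the main obstacle: the mixed continuous/Dirac structure of the kernels requires careful tracking, and the $-$ version additionally needs $A_{L_t-}$ to be well defined, which follows from the càdlàg paths of $A$ together with $L_t$ being the exit time of $(A_u,\sigma_u)$ from the open set $E\times(-\infty,t)$.

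Once Chapman--Kolmogorov is in hand, the standard canonical construction cited produces the processes $\,^tZ^\pm$ with families of probability measures $\,^t\mathds{P}_\pm^{(x,v)}$ realising the prescribed semigroup on the canonical path space. Under $\,^t\mathds{P}_\pm^{(x,t)}$, the pathwise description preceding the statement identifies $\,^tZ^\pm_u$ in law with $(A_u,t-\sigma_u)$ on $\{u<L_t\}$ and with $(A_{L_t},0)$ (respectively $(A_{L_t-},0)$) on $\{u\ge L_t\}$. Consequently $\tau_0^\pm$ coincides in law with $L_t$, and $\,^tA_{\tau_0^\pm}^\pm$ with $A_{L_t}$ (respectively $A_{L_t-}$), which is exactly \eqref{317}--\eqref{318}.
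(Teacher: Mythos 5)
Your proposal is correct and follows the same route as the paper, which likewise presents the theorem as a summary of the preceding construction, asserts that the Chapman--Kolmogorov property ``can be checked,'' and reads \eqref{317}--\eqref{318} off the pathwise description of $^tZ^\pm$. Like the paper, you outline rather than fully execute the Chapman--Kolmogorov bookkeeping for the mixed continuous/Dirac kernels; the event identity $\{L_t \le u\}=\{\sigma_u \ge t\}$ underlying your sub-probability bound does hold, by the strict monotonicity and right-continuity of $\sigma$.
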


It is a classical result in CTRWs limits theory that, in the uncoupled case, the processes \eqref{ctrwlim} and \eqref{octrwlim} coincide a.s. as $A_u$ and $\sigma_u$ do not have simultaneous jumps, see e.g. \cite[Section 3]{straka2011lagging}. Then, using our approach, this becomes clear from equations \eqref{317} and \eqref{318} by a simple conditioning argument. 

Denote now by $^t\mathfrak{A}^{\pm}$ the generators of the processes $^tZ^{\pm}$. The family of harmonic problems for these processes, exiting from the open set $E \times (0, +\infty)$ would be
\begin{align}
    \begin{cases} ^t\mathfrak{A}^{\pm} q_t(x,v) \, = \, 0 \qquad &(x,v) \in E \times (0, +\infty) \\ q_t(x,v) = u(x) & (x,v) \in E \times \{ 0 \},
\end{cases}
\label{1407}
\end{align}
which, heuristically, should be solved by
\begin{align}
    q_t(x,v) \, = \, \mathds{E}_{\pm}^{(x,v)} u(\,^tA^{\pm}_{L_{0}^{\pm}}).
    \label{1408}
\end{align}
In view of \eqref{317} and \eqref{318}, we should obtain
\begin{align}
 &     q_t(x,t) \, = \, \mathds{E}_{+}^{(x,t)} u(\, ^tA^{+}_{\tau_{0}^{+}}) \, = \, \mathds{E}^{(x,0)} u(A_{L_t}) \label{320}\\ &   q_t(x,t) \, = \, \mathds{E}_{-}^{(x,t)} u(\, ^tA^{-}_{\tau_{0}^{-}}) \, = \, \mathds{E}^{(x,0)} u(A_{L_t-}).
\end{align}
If \eqref{1407} is indeed satisfied by \eqref{1408} then this means that the governing equation of the CTRWs' limit process has this form.

With this approach, it is possible to obtain several well-known cases in the literature. Before we give such examples, we discuss the case when $(A_u,\sigma_u)_u$, is a Markov additive process that simplifies the previous construction.

\subsection{Harmonicity problem for Markov additive $(A_u,\sigma_u)$}\label{ss:additive}
    Probabilistically, the case when $(A_u,\sigma_u)_u$ is Markov additive is the case when the future evolution of $(A_u,\sigma_u)$ depends only on the current position of $A_u$. Furthermore, the first coordinate of $(A_u,\sigma_u)_{u\ge 0}$ is a Markov process by itself, and the second one is homogeneous in space. For details, see, e.g., \cite{cinlarma}. Analytically, if we have the generator representation \eqref{genctrwlim}, then the coefficients $b_i$, $\gamma$, $a_{ij}$, and the jumping kernel $K$ do not depend on $t$. This allows us to simplify the previous considerations in which we can abandon the parameter $t$ in the transitions $\,^tp^{\pm}$ and the operators $\,^t\mathfrak{A}^{\pm}$, while at the same time we still cover a wide range of interesting examples.
    
    Indeed, the homogeneity of the second coordinate says $p_u(x,t-v;dy,t-dw)=p_u(x,-v;dy,-dw)=p_u(x,0;dy,v-dw)$, so to obtain the evolution of $(A_{L_t})_t$, we can define only one family of transition functions. Instead of \eqref{transitionover}, we can look at
    \begin{align}
        {p}_{u}^+(x,t;dy,dw) &\coloneqq \mathds{1}_{[0<w\leq t]}  p_{u}(x,0;dy,t-dw)\nonumber\\
 &\hspace{4em}+ \delta_0(dw)p_{u} (x,0; dy, [t, +\infty)).\label{transitionover-NO2}
    \end{align}
    This is a (sub-)Markov transition kernel, and it holds that 
    \begin{align}
        {p}_{u}^+(x,t;dy,dw)=\pr^{(x,0)}(A_u\in dy,(t-\sigma_u)\vee 0\in dw).
    \end{align}
    Note that, since $A_u$ is Markov by itself, we allowed the first coordinate to keep moving after stopping the second one, unlike in \eqref{transitionover} where we have stopped both coordinates.

    This construction gives us a more explicit version of Theorem \ref{t:1151} in the Markov additive case.
    \begin{proposition}
         Let $(A_u, \sigma_u)_u$ be a Markov additive process as above, with the transition probabilities $p_{u}(x,v;dy,dw)$. Then, there exist a Markov process $Z^+=(A^+_u,\sigma^+_u)_u$, on the canonical probability space with the measures $\mathds{P}_+^{(x,t)}$, with the transition probabilities 
    \begin{align}\label{transitionunder-NO4}
        {p}_{u}^+(x,t;dy,dw)=\pr^{(x,0)}(A_u\in dy,(t-\sigma_u)\vee 0\in dw).
    \end{align}
such that 
\begin{align}
   & \mathds{P}^{(x,0)} \left( A_{L_t} \in dy \right) =  \mathds{P}_+^{(x,t)} \left(A_{\,\tau_0^+}^+ \in dy \right),
\end{align}
where $\tau^{+}_0 \coloneqq \inf \{ u \geq 0 : Z^{+}_u \in E \times \{ 0 \} \}$.
    \end{proposition}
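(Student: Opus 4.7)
The plan is to mirror the construction leading to Theorem~\ref{t:1151}, but to exploit Markov additivity of $(A_u,\sigma_u)$ in order to bypass the parameter $t$ and freeze only the second coordinate. First I would verify that the family of kernels in \eqref{transitionunder-NO4} forms a sub-Markov transition function. Measurability in $(x,t)$ is inherited from $p_u(x,0;dy,dw)$, so the only substantive point is the Chapman--Kolmogorov identity.

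To check it, fix $s,r\ge 0$ and expand
\begin{equation*}
\int p_s^+(x,t;dz,du)\, p_r^+(z,u;dy,dw)
\end{equation*}
via the probabilistic description in \eqref{transitionunder-NO4}, representing the inner kernel through an independent copy $(\tilde A,\tilde\sigma)$ of $(A,\sigma)$ started from $(z,0)$. By the Markov additive property, the conditional law on $\{A_s=z\}$ of $(A_{s+r},\sigma_{s+r}-\sigma_s)$ coincides with that of $(\tilde A_r,\tilde\sigma_r)$ under $\mathds{P}^{(z,0)}$, so the double integral equals
\begin{equation*}
\mathds{P}^{(x,0)}\!\left(A_{s+r}\in dy,\ \big((t-\sigma_s)\vee 0 - (\sigma_{s+r}-\sigma_s)\big)\vee 0\in dw\right).
\end{equation*}
A case split on $\sigma_s\le t$ versus $\sigma_s>t$, together with the monotonicity $\sigma_{s+r}\ge\sigma_s$, yields the pointwise identity $\big((t-\sigma_s)\vee 0 - (\sigma_{s+r}-\sigma_s)\big)\vee 0 = (t-\sigma_{s+r})\vee 0$, and hence the previous display equals $p_{s+r}^+(x,t;dy,dw)$.

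Once the Chapman--Kolmogorov identity is in place, the canonical construction used in Theorem~\ref{t:1151} (cf.\ \cite[III.2]{revuz2013continuous}) yields the Markov process $Z^+=(A^+_u,\sigma^+_u)_u$ with laws $\mathds{P}_+^{(x,t)}$ and the prescribed transitions. By construction, $Z^+$ under $\mathds{P}_+^{(x,t)}$ has the same finite-dimensional distributions as the pathwise-defined $\big(A_u,(t-\sigma_u)\vee 0\big)_u$ under $\mathds{P}^{(x,0)}$. Realising both on a common probability space via this coupling, the hitting time $\tau_0^+=\inf\{u\ge 0:\sigma^+_u=0\}$ is identified with $\inf\{u\ge 0:\sigma_u\ge t\}=L_t$, and projecting onto the first coordinate at that time gives $\mathds{P}_+^{(x,t)}(A^+_{\tau_0^+}\in dy)=\mathds{P}^{(x,0)}(A_{L_t}\in dy)$.

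The only non-routine point is the Chapman--Kolmogorov calculation, and inside it the elementary identity displayed above. This is the analytic expression of the remark preceding \eqref{transitionover-NO2}: because $A_u$ is already Markov on its own, the first coordinate may keep evolving past $L_t$ while only the second coordinate is frozen at $0$, and the freezing operator $\cdot\vee 0$ commutes correctly with the additive increments of the nondecreasing $\sigma$. All remaining steps are formal consequences of the canonical construction and the direct identification of the hitting time.
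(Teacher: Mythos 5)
Your proof is correct and follows the same route the paper takes: the paper states the proposition as a consequence of the preceding construction without writing out a proof, and the one point it leaves implicit, the Chapman--Kolmogorov identity for $p^+_u$, is exactly what you verify, using the Markov additive property together with the elementary identity $\big((t-\sigma_s)\vee 0 - (\sigma_{s+r}-\sigma_s)\big)\vee 0 = (t-\sigma_{s+r})\vee 0$ (valid since $\sigma$ is nondecreasing). The canonical construction of $Z^+$ and the pathwise identification $\tau_0^+ = \inf\{u\ge 0:\sigma_u\ge t\} = L_t$ (which holds because $\sigma$ is strictly increasing and unbounded) likewise match the paper's intended argument.
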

    
   We should expect that  the harmonic problem, where $\mathfrak{A}^{+}$ is the generator of $Z^+$,
   \begin{align}
    \begin{cases} \mathfrak{A}^{+} q(x,t) \, = \, 0 \qquad &(x,t) \in E \times (0, +\infty) \\ q(x,t) = u(x) & (x,t) \in E \times \{ 0 \},
\end{cases}
\label{1407-a}
\end{align}
is solved by $q(x,t)=\ex_+^{(x,t)}u(A^+_{\tau^+_0})=\ex^{(x,0)} u(A_{L_t}).$ In other words, $\mathfrak{A}^{+}$ is a good candidate for the operator describing the evolution of the process $A_{L_t}$. 

Similarly, for the evolution of $A_{L_t-}$, we can define the (Markov) transitions
\begin{align}
    {p}_{u}^-(x,t;dy,dw) &\coloneqq \mathds{1}_{[0<w\leq t]}  p_{u}(x,0;dy,t-dw)\nonumber\\
 &\hspace{4em}+ \delta_0(dw)\pr^{(x,0)} (A_{L_t-}\in dy, \sigma_u\in [t, +\infty)),\label{transitionunder-NO2}
\end{align}
which generate (just one) canonical process $Z^-=(A^-_u,\sigma^-_u)_u$, with a generator $\mathfrak{A}^-$. In this case, we get
\begin{align}
\begin{split}\label{1550}
    {p}_{u}^-(x,t;dy,dw)&=\mathds{1}_{[w>0]} \mathds{P}^{(x,0)} \left( A_u \in dy, t-\sigma_u \in dw \right)\\ &\hspace{4em}+ \mathds{P}^{(x,0)} \left( A_{L_t-} \in dy, L_t \leq u \right) \delta_0(dw).
\end{split}
\end{align}
By arguments as before, the harmonic problem
   \begin{align}
    \begin{cases} \mathfrak{A}^{-} q(x,t) \, = \, 0 \qquad &(x,t) \in E \times (0, +\infty) \\ q(x,t) = u(x) & (x,t) \in E \times \{ 0 \},
\end{cases}
\label{1407-aa}
\end{align}
should by solved by $q(x,t)=\ex_-^{(x,t)}u(A^-_{\tau^-_0})=\ex^{(x,0)} u(A_{L_t-}),$ where $\tau_0^-=\inf\{u\ge0:Z_u^-\in E\times \{0\}\}$.

This unified approach can be applied to obtain evolution equations studied in the literature but dealt with different methods. See Section \ref{sec:ex} where we discuss these evolutions in greater detail.

\begin{remark}[Courrège-type form of $\mathfrak{A}^+$]
    If the Courrège-type form of $\mathcal{A}$ is known, and e.g. given by \eqref{genctrwlim}, then it is possible to heuristically deduce a Courrège-type form of $\mathfrak{A}^+$. Indeed, recall that $\ex^{(x,t)}_+ f(Z_u^+)=\ex^{(x,0)}[ f(A_u,(t-\sigma_u)\vee 0)]$ so
    \begin{align}
        \begin{split}\label{gen:1026}
            \mathfrak{A}^+f(x,t)&=\lim_{u\searrow0}\frac{\ex^{(x,t)}_+f(Z_u^+)-f(x,t)}{u}\\&=\lim_{u\searrow0}\frac{\ex^{(x,0)}[ f(A_u,(t-\sigma_u)\vee 0)]-f(x,t)}{u}=\mathcal{A}F_t(x,0),
        \end{split}
    \end{align}
    where $F_t(x,s)=f(x,(t-s)\vee 0)$. Here we emphasize that the function $F_t$ is certainly not in the $C_0(\R^{d+1})$-domain of $\mathcal{A}$ since it does not vanish at the infinity. However, we can look at \eqref{gen:1026} heuristically, or e.g. by considering the pointwise extensions of the operators. Hence, if $\mathcal{A}$ has the form \eqref{genctrwlim}, by \eqref{gen:1026} the generator $\mathfrak{A}^+$ has the form
    \begin{align*}
        &\mathfrak{A}^+f(x,t)=\sum_{i=1}^d b_i(x) \partial_{x_i} f(x,t) - \gamma(x) \partial_t f(x,t) + \frac{1}{2} \sum_{1 \leq i, j \leq d} a_{ij}(x) \partial_{x_ix_j}^2f(x,t)  \\
    &+ \int \left( f(x+y, (t-w)\vee 0) - f(x,t)-\sum_{i=1}^d y_i \mathds{1}_{[(y,w) \in [-1,1]^{d+1}} \partial_{x_i} f(x,t)\right) K(x;dy,dw).
    \end{align*}
    Note that here the jumping kernel $K(x;dy,dw)$ is independent of $t$ by the Markov additive property of $(A_u,\sigma_u)_u$.
\end{remark}

\section{Overshooted processes and coupled non-local operators}

In this paper, we will provide a rigorous treatment of the evolution equation for a class of processes arising as scaling limits of OCTRWs and leading to Feller processes time-changed with the overshooting of a subordinator. In particular, we will rigorously solve OCTRW harmonic problem \eqref{1407} in the case when $\mathcal{A}$ generates the process $(M_{\sigma_u-\sigma_0}, \sigma_u)_u$ (see Section \ref{exunder} for the limit of the corresponding CTRW).
Continuous time random walks leading to these kind of processes have been considered, for example, in \cite{becker2004limit, meerschaert2002governing, shlesinger1982random}. For the sake of clarity, we exemplify the theory by using the case where the computation can be done quite explicitly (see Section \ref{exunder} for a more general, less explicit, approach).
\begin{example}
\label{examplecoupled}
  Suppose that the jumps $J_n$, $n \in \mathbb{N}$, and the waiting times $W_n$, $n \in \mathbb{N}$, form an i.i.d. sequence of pairs that have the joint density function 
\begin{align}
    f_{J,W}(x,s) \, = \, \frac{1}{\sqrt{2\pi s}}e^{-x^2/2s} e_\alpha (s), \qquad s > 0, x \in \mathbb{R},
    \label{jointgausmittag}
\end{align}
where $e_\alpha(\cdot)$ is the density of the Mittag-Leffler distribution, i.e., for $s>0$,
\begin{align}
    e_\alpha(s) \coloneqq -\frac{d}{ds} E_\alpha(-s^\alpha) \coloneqq -\frac{d}{ds} \sum_{k=0}^\infty \frac{(-s^\alpha)^k}{\Gamma(1+\alpha k)},
\end{align}
where $\alpha\in (0,1)$.
For a scaling parameters $c>0$, define the scaled jumps and waiting times by $(J^{c}_k,W^{c}_k)=(c^{-1/\alpha}J_k,(\sqrt{c})^{-1/\alpha}W_k)$, $k\in\N$.

We now compute the Fourier-Laplace transform of $(S_{[cu]}^c,T_{[cu]}^c)$. With classical manipulations, by a simple conditioning argument and applying the freezing lemma we have that, for $\lambda >0$, $\xi \in \mathbb{R}$,
\begin{align}
   & \mathds{E}\left(e^{-\lambda c^{-1/\alpha}\sum_{k=1}^{[cu]} W_k + i\xi (\sqrt{c})^{-1/\alpha}\sum_{k=1}^{[cu]} J_k}\right) \notag \\ = \, &\mathds{E} \left(\mathds{E} \left[ e^{-\lambda c^{-1/\alpha}\sum_{k=1}^{[cu]} W_k + i\xi (\sqrt{c})^{-1/\alpha}\sum_{k=1}^{[cu]} J_k} \mid W_1, \cdots, W_{[cu]} \right]\right) \notag \\
   = \, & \mathds{E}\prod_{k=1}^{[cu]} \left(e^{-c^{-1/\alpha}\lambda W_k} \mathds{E} \left[ e^{i (\sqrt{c})^{-1/\alpha}\xi J_k} \mid W_k \right]\right) \notag \\
   = \, & \left( \mathds{E}\left[e^{-\lambda c^{-\frac{1}{\alpha}} W_1} e^{-W_1 \frac{\xi^2}{2c^{1/\alpha}}} \right]\right)^{[cu]} \notag \\
   = \, & \left( \frac{1}{1+ \frac{1}{c}(\lambda + \xi^2/2)^\alpha} \right)^{[cu]},
   \label{forosmall}
\end{align}
where in the last step we also used that (see, for example, \cite{scalas2006five})
\begin{align}
    \int_0^{+\infty} e^{-\lambda s} e_\alpha (s) ds \, = \, \frac{1}{1+\lambda^\alpha}.
\end{align}
Then, from \eqref{forosmall}, we obtain
\begin{align}
   & \mathds{E}\left(e^{-\lambda c^{-1/\alpha}\sum_{k=1}^{[cu]} W_k + i\xi (\sqrt{c})^{-1/\alpha}\sum_{k=1}^{[cu]} J_k}\right) \to e^{- u\left( \lambda + \frac{\xi^2}{2} \right)^\alpha} ,\quad\text{as $c\to+\infty$.}
    \label{foulap}
\end{align}
It is easy to see that the process $(B_{\sigma_t}, \sigma_t)_t$, where $(B_t)_t$ is a standard Brownian motion and $(\sigma_t)_t$ a $\alpha$-stable subordinator independent of $(B_t)_t$,  has the Fourier-Laplace symbol as in \eqref{foulap}, thus identifying the process in the limit  \eqref{asslimitctrw}. It follows by \eqref{ctrwlim} and \eqref{octrwlim} that the associated CTRW converges to $B_{\sigma_{L_t-}}$ while the OCTRW converges to $B_{\sigma_{L_t}}$, where $L_t$ is the inverse of the stable subordinator $(\sigma_t)_t$.
\end{example}

Now we return to the general theory. We assume that $(M_u)_{u\ge0}$ is a Feller process (possibly sub-Markov) on a locally compact separable metric space $E$ and $(\sigma_u-\sigma_0)_{u\ge0}$  is a subordinator independent of $(M_u)_{u\ge0}$. The process $(M_{\sigma_u-\sigma_0}, \sigma_u)_{u\ge 0}$, is in fact Markov additive (see \cite[Lemma 4.2]{ascione2024}) and thus the general family of harmonic problems \eqref{1407} simplifies to the harmonic problem \eqref{1407-a}, while the operators $^t\mathfrak{A}^+$ and the transitions $\,^tp^+$ simplify to  $\mathfrak{A}^+$ and $p^+$, as in Subsection \ref{ss:additive}. Recall that, by the construction, the solution to the harmonic problem \eqref{1407}, or more precisely to \eqref{1407-a}, should be
\begin{align}
    q(x,t) \, = \, \mathds{E}^{(x,0)}u(M_{\sigma_{L_t}})=\mathds{E}^{(x,0)}u(M_{D_t}).
    \label{expover}
\end{align}
Here $D_t\coloneqq \sigma_{L_t}$, and it is called the undershooting of the subordinator $\sigma_t-\sigma_0$. 

We continue by providing a precise formulation of the evolution problem related to \eqref{expover}. Our construction holds for general Feller process $(M_u)_{u\ge0}$ in $E$ and a general subordinator $(\sigma_u)_{u\ge0}$, but it is in the spirit of Section \ref{sec:harmonic} to study processes on $\R^{d+1}$ and to have a Courrège-type form of $\mathcal{A}$ and $\mathfrak{A}^+$ which we give under mild assumptions on the generator of $(M_u)_{u\ge0}$, in Lemma \ref{lem:subord} and Lemma \ref{lem:operator}.

To this end, let $(M_u)_{u\ge0}$ (a Feller process in $E$) have the transition functions $p_t(x,dy)$, $x\in E, \, t\ge0$, and the corresponding semigroup $(P_t)_{t\ge0}$:
\begin{align}
    P_t f(x)=\int_{E}f(y)p_t(x,dy),\quad x\in E, \, f\in C_0(E),
\end{align}
Denote by $G$ the infinitesimal generator of $(P_t)_{t\ge0}$:
\begin{align}\label{eq:operator-defn}
    Gf(x)\coloneqq\lim_{t\downarrow 0}\frac{P_tf(x)-f(x)}{t}, \quad f\in \DD(G),
\end{align}
where $\DD(G)$ denotes all functions in $C_0(E)$ such that the limit in \eqref{eq:operator-defn} exists in the supremum norm and such that $Gf\in C_0(E)$.

To obtain a part of our results in the case $E=\R^d$, we will sometimes assume the following regularity for $G$:
\begin{assumption}{G}{}\label{assG}
    The process $(M_u)_u$ lives on $\R^d$ and the operator $(G, \mathcal{D}(G))$ has an operator core $C_c^\infty (\mathbb{R}^d)$ for which it holds
    \begin{align}
    \begin{split}
        G\big|_{C_c^\infty (\mathbb{R}^d)}f(x)\, = \, &\sum_{i=1}^d \wt b_i(x) \partial_{x_i} f(x) +\sum_{i,j=1}^d\wt{a}_{ij}(x)\partial_{x_ix_j}f(x)\\& \hspace{-1em}+ \int \left( f(x+y) - f(x)-\sum_{i=1}^d y_i \mathds{1}_{x\in [-1,1]^{d}} \partial_{x_i} f(x)\right) \wt K(x;dy). 
    \end{split}\label{genM}
\end{align}
\end{assumption}

Let  $(\sigma_u,u\ge 0)$ be a subordinator, i.e. a non-decreasing L\'evy process in $\R$, independent of $M$, characterized by its Laplace exponent
\begin{align}
    \phi(\lambda)=b\lambda+\int_{0}^\infty(1-e^{-\lambda t})\nu(dt),\quad \lambda\ge 0.
    \label{reprphi}
\end{align}
In other words, $\mathds{E}^{(x,v)}\left[e^{-\lambda(\sigma_u-\sigma_0)}\right]=e^{-u\phi(\lambda)}$, for $u,\lambda\ge0$. Here the expectation $\ex^{(x,v)}$ corresponds to the canonical probability measure under which $\pr^{(x,v)}(M_0=x,\sigma_0=v)=1$.  The coefficient $b\ge0$ is called the drift, while the measure $\nu$ satisfies $\int_0^\infty(1\wedge t)\nu(dt)<\infty$, and is called the L\'evy measure of the subordinator. The subordinators are completely described by the exponent $\phi$, and the functions of the form \eqref{reprphi} are called Bernstein functions. For details, we refer to \cite{bernstein}. Sometimes it is useful to use the following representation of $\phi$ which follows from Fubini's theorem:
\begin{align}
    \phi(\lambda)  = \,b \lambda +\lambda\int_0^{+\infty} e^{-\lambda t} \bar{\nu}(t)\, dt,
    \label{reprphibar}
\end{align}
with
\begin{align}
    \bar{\nu}(t) \coloneqq \nu(t, +\infty),\quad t\ge0,
\end{align}
that will be used later.

Let us denote by $\sigma^0=(\sigma_u-\sigma_0)_{u\ge0}$ which is also considered a subordinator, but always started at 0. Note that under $\pr^{(x,0)}$ we have $\sigma^0_u=\sigma_u$ for all $u\geq0$.

In the spirit of Section \ref{sec:harmonic}, in the case when $E=\R^d$ and under the assumption \ref{assG}, let us take a generator form \eqref{genctrwlim} with the following coefficients 
\begin{align}
\begin{split}\label{ourgen-0}
    &K(x,t,A_1, t+A_2)=K(x,A_1, A_2) \\
 &\qquad\coloneqq \int_{\mathbb{R}^d} \int_{\mathbb{R}} \mathds{1}_{A_1}(y) \mathds{1}_{A_2} (u) p_u(x,x+dy) \nu(du)+b\,\delta_0(A_2)\wt{K}(x,A_1),\\
 &\gamma (x,t) \coloneqq b, \quad a_{ij}(x,t) \coloneqq b\,\wt{a}_{i,j}(x) , \\
 &b_i(x,t) = b_i(x)\coloneqq\int_0^{1}\int_{|y|\leq 1}  y_i \; p_u(x,x+dy)\nu(du)+b\,\wt b_i(x).
\end{split}
\end{align}
In the next lemma, we show that such coefficients lead to a Feller process $(A_u,\sigma_u)_{u\ge0}$ studied in Section \ref{sec:harmonic} and the process has the representation $(A_u,\sigma_u)=(M_{\sigma_u-\sigma_0}, \sigma_u)$.

\begin{lemma}
\label{lem:subord}
The process $(M_{\sigma_u-\sigma_0}, \sigma_u)_{u\ge 0}$ is a Feller process on $E\times \R$. Under additional assumption \ref{assG}, its infinitesimal generator has an operator core $C_c^\infty(\R^{d})\times C_c^\infty(\R)$ and on the core the generator is given by
\eqref{genctrwlim} with the coefficients and the jump kernel as in \eqref{ourgen-0}.
\end{lemma}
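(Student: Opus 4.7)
The plan is to recognize $(M_{\sigma_u-\sigma_0},\sigma_u)_{u\ge0}$ as the Bochner subordination of the time-augmented Markov process $(M_u,\,v+u)_{u\ge0}$ by the subordinator $\sigma^0=(\sigma_u-\sigma_0)_{u\ge0}$, and then apply Phillips' theorem for subordinated semigroups. Concretely, I would first introduce the auxiliary semigroup $Q_s f(x,v):=\ex^x[f(M_s,v+s)]$ on $C_0(E\times\R)$. Since $M$ is Feller and translation in $v$ is a Feller flow, $(Q_s)_{s\ge0}$ is a Feller semigroup whose generator $\mathcal{B}$ acts, on sufficiently smooth functions, as $\mathcal{B}f(x,v)=G_x f(x,v)+\partial_v f(x,v)$.

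By the independence of $M$ and $\sigma$, under $\pr^{(x,v)}$ we have $(M_{\sigma_u-\sigma_0},\sigma_u)=(M_{\sigma_u^0},v+\sigma_u^0)$, so the transition semigroup of the pair process equals
\begin{equation*}
T_u f(x,v)=\int_0^\infty Q_s f(x,v)\,\mu_u(ds),
\end{equation*}
where $\mu_u$ denotes the law of $\sigma_u^0$. The classical fact that the Bochner subordination of a Feller semigroup by a subordinator is again Feller then yields the first assertion. For the generator, Phillips' theorem gives
\begin{equation*}
\mathcal{A}f=b\,\mathcal{B}f+\int_0^\infty (Q_s-I)f\,\nu(ds),\qquad f\in D(\mathcal{B}),
\end{equation*}
and asserts that $D(\mathcal{B})$ is a core for $\mathcal{A}$.

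On a product $f(x,v)=g(x)h(v)$ with $g\in C_c^\infty(\R^d)$ and $h\in C_c^\infty(\R)$, a direct computation using the uniform convergence of $(P_s^M g-g)/s\to Gg$ and $(h(\cdot+s)-h)/s\to h'$ shows $f\in D(\mathcal{B})$ with $\mathcal{B}f=(Gg)\,h+g\,h'$, while $(Q_s-I)f(x,v)=\int[g(x+y)h(v+s)-g(x)h(v)]\,p_s(x,x+dy)$. Inserting the Courrège-type representation \eqref{genM} for $G$ (available on $C_c^\infty(\R^d)$ by Assumption \ref{assG}) and rearranging the contributions according to the drift, diffusion, and jump parts of the two generators produces exactly the expression \eqref{genctrwlim} with coefficients and jump kernel as in \eqref{ourgen-0}; linearity then extends the identification to the algebraic span $C_c^\infty(\R^d)\otimes C_c^\infty(\R)$.

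The step I expect to be the main obstacle is verifying that $C_c^\infty(\R^d)\otimes C_c^\infty(\R)$ is itself an operator core for $\mathcal{A}$. By Phillips' theorem this reduces to showing that the tensor product is a core for $\mathcal{B}$. Because the $M$-flow acts only on $x$ and the deterministic translation acts only on $v$, $\mathcal{B}$ decomposes as a sum of commuting generators $G\otimes I+I\otimes \partial_v$; using that $C_c^\infty(\R^d)$ is a core for $G$ (Assumption \ref{assG}) and $C_c^\infty(\R)$ is a core for $\partial_v$ on $C_0(\R)$, a standard tensor-product density argument — verifying that $(\lambda-\mathcal{B})$ has dense range on the algebraic tensor product for some $\lambda>0$ — will conclude. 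A secondary technical point is the bookkeeping of the first-order compensator term $\int_0^1\int_{|y|\le1}y_i\,p_u(x,x+dy)\nu(du)$ entering $b_i(x)$ in \eqref{ourgen-0}: this is precisely the contribution needed so that the jump integral against $K(x;dy,dw)$ converges with the truncation $\mathds{1}_{(y,w)\in[-1,1]^{d+1}}$ used in \eqref{genctrwlim}, and it must be matched by a careful split of the $\nu$-integral over $\{s\le 1\}$ against the small-jump compensator appearing in the Courrège form of $G$.
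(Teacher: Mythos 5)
Your proposal follows essentially the same route as the paper: realize $(M_{\sigma_u-\sigma_0}, \sigma_u)$ as the Bochner subordination of the time-augmented Feller process $(M_u, v+u)$ by $\sigma^0$, obtain the Feller property from the stability of Feller semigroups under subordination, and invoke Phillips' theorem to get the generator and reduce the core question to the unsubordinated generator $\mathcal{B} = G + \partial_v$. Two points should be tightened. First, Phillips' theorem asserts that $\DD(\mathcal{B})$ itself is a core for $\mathcal{A}$, but not automatically that every core of $\mathcal{B}$ is a core of $\mathcal{A}$; the paper supplies this transfer in Lemma~\ref{app:core} via the estimate $\|(Q_s-I)f\|_\infty \le \min\{2\|f\|_\infty,\, s\|\mathcal{B}f\|_\infty\}$ together with the Phillips representation. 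Second, your plan to verify that $(\lambda-\mathcal{B})$ has dense range on $C_c^\infty(\R^d)\times C_c^\infty(\R)$ is a correct criterion but awkward to check directly, since neither the product semigroup nor its resolvent preserves compact supports. The paper instead shows that the larger space $\textrm{span}\{fg : f\in\DD(G),\, g\in\DD(\partial_t)\}$ is dense (Stone--Weierstrass) and invariant under the product semigroup, hence a core by the dense-invariant-subspace theorem, and then approximates down to $C_c^\infty(\R^d)\times C_c^\infty(\R)$ using that $C_c^\infty$ is a core for each factor; that is a cleaner way to close the step you flagged as the main obstacle.
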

\begin{proof}
    Note that the process $(M_{\sigma_u-\sigma_0}, \sigma_u)$ is nothing more than the Feller process $(M_u, \Gamma_u)$, $u \geq 0$, where $\Gamma_u$ is a pure drift\footnote{In other words, $\pr^{(x,t)}(M_u\in dy, \Gamma_u\in ds)=p_u(x,dy)\delta_{t+u}(ds)$, and a comment on its Feller property is in Lemma \ref{app:core}.}, which is subordinated with $\sigma_u^0$. Then we can apply \cite[Proposition 13.1]{bernstein} to conclude that $(M_{\sigma_u-\sigma_0}, \sigma_u)_{u\ge0}$ is a Feller process.

    To obtain the representation of the generator of $(M_{\sigma_u-\sigma_0}, \sigma_u)$, we will use  Phillips' Theorem \cite[Theorem 13.6]{bernstein}. Since $(\partial_t,C^1_0(\R))$\footnote{The space $C^1_0(\R)$ contains all $f\in C^1(\R)\cap C_0(\R)$ such that $f'\in C_0( \R) $.} is the infinitesimal generator of $(\Gamma_u)_{u\ge0}$, we denote the infinitesimal generator of $(M_u, \Gamma_u)_{u\ge0}$ by $(G+\partial_t)$ for which it holds 
    \begin{align}
        (G+\partial_t)h(x,t)=Gh(x,t)+\partial_th(x,t),
    \end{align}
    for $h\in \textrm{span}\{f g:f\in \DD(G),g\in C_0^1(\R)\}\subset \DD(G+\partial_t)$.

    Phillips' theorem implies that the infinitesimal generator of $(M_{\sigma_u-\sigma_0}, \sigma_u)$, denoted here by $\mathcal{A}$ has a core $\DD(G+\partial_t)$, and it holds that
    \begin{align}
      \mathcal{A} h(x,t) = b(G+\partial_t)h(x,t)+  \int_0^{+\infty} (\mathds{E}^xh(M_w, t+w) -h(x,t)) \nu(dw),
        \label{1640}
    \end{align}
    for all functions $h\in \DD(G+\partial_t)$.

    Under the additional assumption \ref{assG}, we are in the case $E=\R^d$, and it follows that $C_c^\infty(\R^{d})\times C_c^\infty(\R)$ is an operator core for $(G+\partial_t)$, and by subordination also for $\mathcal{A}$, see Lemma \ref{app:core}. Hence, by regularizing and rearranging the terms in \eqref{1640}, we get that $\mathcal{A}\big|_{C_c^\infty(\R^{d})\times C_c^\infty(\R)}$ has the representation \eqref{genctrwlim} with coefficients as in \eqref{ourgen-0}.
\end{proof}

Before obtaining the operator $\mathfrak{A}^+$, note that the process $L= (L_t)_{t \geq 0}$ defined in \eqref{inverse} becomes under $\pr^{(x,0)}$ the inverse of the subordinator $\sigma^0 = \sigma$, and that the evolution that we study in this case is $A_{L_t}=M_{\sigma_{L_t}}=M_{D_t}$, under $\pr^{(x,0)}$. The process  $(D_t)_{t\ge0}$ is the overshooting of $\sigma$, and it is a right-continuous process which is not a time-homogeneous Markov process\footnote{More on inverse subordinators and overshootings one can find e.g. in \cite{tlms, bertoin1996, bertoin1999}.}. Since $L_t$ is non-Markov, the same should be expected for $(M_{D_t})_{t\ge0}=(M_{\sigma_{L_t}})_{t\ge0}$. In fact, $(M_{D_t})_{t\ge0}$ is semi-Markov in the sense of Gihman and Skorohod: a random process $Y= (Y_t)_{t \geq 0}$ is said to be semi-Markov in this sense (see \cite[Chapter III]{harlamov}) if the process $(Y, \gamma) = (Y_t, \gamma_t)_{t \geq 0}$, where $\gamma(t) \coloneqq t-0\vee \sup \{ s \geq 0 : Y(s) \neq Y(t) \}$, is a strong Markov process. This can be seen as a consequence of \cite[Theorem 3.1]{Meerschaert2014} from which it follows that $(M_{D_t}, D_t-t)_{t \geq 0}$ is a strong Markov (Hunt) process.

From the point of view of trajectories, since $D$ has jumps and intervals of constancy, the process $Y = (M_{D_t})_{t \geq 0}$ has jumps and intervals of constancy, too. Intuitively, the governing equation of $Y$ should be the operator that is non-local jointly in space-time and coupled, i.e., it acts jointly on the space and time variables ($x$ and $t$) and it cannot be decomposed in two operators acting separately on $x$ and $t$. This can be easily seen by observing that the lengths of the jumps of $D_t$ are determined by the length of its intervals of constancy: this induces dependence between jumps and intervals of constancy in the time-changed process $M_{D_t}$; hence one should expect jointly non-local operator.

We provide now a representation for the operator $\mathfrak{A}^+$ appearing in \eqref{1407-a} that drives the evolution of $(M_{D_t})_{t\ge0}$. Note that $\mathfrak{A}^+$ is obtained from the process $(A_u,\sigma_u)=(M_{\sigma_u-\sigma_0}, \sigma_u)$ by reversing the second coordinate, letting it start from some positive starting point, and stopping it after it crosses 0, see \eqref{transitionunder-NO4}. In other words, in our case the $\mathfrak{A}^+$ is the generator 
of the process $(Z_u^+)_{u\ge0}$ with transitions given by 
\begin{align}\label{transitions-over}
    p^+_u(x,t,dy,dw)=\pr^{(x,t)}(Z_u^+\in(dy,dw))=\pr^{(x,0)}(M_{\sigma_u}\in dy,(t-\sigma_u)\vee 0\in dw).
\end{align}
Such modification in the second coordinate involves the negative translation (pure drift) stopped at zero, i.e. it involves the deterministic process $(\Gamma_u^0)_{u\ge0}$ given by the deterministic transition of $\Gamma^0_u$ from $t$ to $(t-u)\vee 0$. This is a Feller process with a generator denoted by $\partial_t^{(0,\infty)}$, with a domain $C_{00}^1([0,\infty))\coloneqq \{f\in C_0([0,\infty)):f\in C^1(0,\infty),\, \lim\limits_{t\searrow0}f'(t)=\lim\limits_{t\to\infty}f'(t)=0\}=\{f\in C_0([0,\infty)):f\in C^1(0,\infty),\, \lim\limits_{t\searrow0}(f(t)-f(0))/t=\lim\limits_{t\to\infty}f'(t)=0\}$, and $\partial_t^{(0,\infty)}f(t)=f'(t)\1_{t>0}$, $f\in C_{00}^1([0,\infty))$.

\begin{lemma}\label{lem:operator}
The process $(Z_u^+)_{u\ge0}$ is a Feller process on $E\times[0,\infty)$ with a generator $\mathfrak{A}^+$ such that
\begin{align*}%\label{gen-final}
     \mathfrak{A}^+ h(x,t)=b(G-\partial_t^{(0,\infty)})h(x,t)+\int_0^\infty \left(P_sh\big(\cdot,(t-s)\vee0\big)(x)-h(x,t)\right)\nu(ds),
\end{align*}
for all $h\in \DD(G)\times C_{00}^1([0,\infty))$. If \ref{assG} holds, then $\mathfrak{A}^+$ has $C_c^\infty(\R^d)\times C_{00}^1([0,\infty))$ as an operator core.
\end{lemma}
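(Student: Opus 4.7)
The plan is to identify $(Z_u^+)_{u\ge 0}$ as the subordination by $(\sigma_u^0)_{u\ge 0}$ of an independent product Feller process, and then invoke Phillips' theorem exactly as in the proof of Lemma \ref{lem:subord}. First I would analyse the deterministic auxiliary process $(\Gamma_u^0)_{u\ge 0}$ on $[0,\infty)$ defined by $t\mapsto (t-u)\vee 0$. The associated semigroup $(T_u g)(t) = g((t-u)\vee 0)$ sends $C_0([0,\infty))$ into itself and is strongly continuous (use uniform continuity on compacts and vanishing at infinity), so $\Gamma^0$ is Feller; a direct differentiation shows that its generator is $-\partial_t^{(0,\infty)}$ with domain exactly $C_{00}^1([0,\infty))$, the subtle point being the behaviour at the boundary point $t=0$ where the one-sided derivative is forced to be $0$ by the absorption.

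Next, since $M$ is Feller on $E$ and $\Gamma^0$ is an independent Feller process on $[0,\infty)$, the product $(M_u,\Gamma_u^0)_{u\ge 0}$ is a Feller process on $E\times[0,\infty)$ with semigroup $(x,t)\mapsto P_s h(\cdot,(t-s)\vee 0)(x)$, and its generator acts as $G-\partial_t^{(0,\infty)}$ on the algebraic tensor product $\DD(G)\otimes C_{00}^1([0,\infty))$. A direct computation using independence of $M$ and $\sigma^0$ shows that integrating the product transition $p_s(x,dy)\delta_{(t-s)\vee 0}(dw)$ against the law of $\sigma_u$ reproduces the transitions \eqref{transitions-over}, identifying $(Z_u^+)_{u\ge 0}$ with the subordination of $(M_u,\Gamma_u^0)_{u\ge 0}$ by $\sigma^0$.

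The Feller property of $Z^+$ then follows from \cite[Prop.~13.1]{bernstein}, and \cite[Theorem~13.6]{bernstein} gives the generator in the Phillips form
\[
\mathfrak{A}^+ h(x,t) \, = \, b(G-\partial_t^{(0,\infty)})h(x,t) + \int_0^\infty \bigl(T_s^{\otimes}h(x,t)-h(x,t)\bigr)\,\nu(ds),
\]
for all $h$ in the domain of $G-\partial_t^{(0,\infty)}$, where $T_s^{\otimes}h(x,t)=P_sh(\cdot,(t-s)\vee 0)(x)$; this immediately yields the stated formula on $\DD(G)\times C_{00}^1([0,\infty))$. Under \ref{assG}, $C_c^\infty(\R^d)$ is a core for $G$ by hypothesis and $C_{00}^1([0,\infty))$ is the full domain of $-\partial_t^{(0,\infty)}$, so a standard tensor-core argument (of the kind invoked in Lemma \ref{app:core}) shows that $C_c^\infty(\R^d)\otimes C_{00}^1([0,\infty))$ is a core for the product generator; Phillips' theorem preserves cores under subordination, giving the core claim for $\mathfrak{A}^+$.

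I expect the main obstacle to be the careful verification of the Feller property and core structure of $\Gamma^0$ on the half-line $[0,\infty)$: the absorption at $0$ forces an unusual constraint on the generator domain (the one-sided derivative $\lim_{t\searrow 0}(f(t)-f(0))/t$ must vanish), and one must make sure the product Feller structure on the non-compact space $E\times[0,\infty)$ is set up so that Phillips' theorem applies. Once the base process is handled, the subordination step is essentially routine given the work already carried out in Lemma \ref{lem:subord}.
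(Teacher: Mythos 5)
Your proposal is correct and follows essentially the same route as the paper: identify $(Z_u^+)$ as the Bochner subordination by $\sigma^0$ of the product Feller process $(M_u,\Gamma_u^0)$ on $E\times[0,\infty)$, invoke \cite[Prop.~13.1]{bernstein} for the Feller property and Phillips' theorem \cite[Thm.~13.6]{bernstein} for the generator formula, and then obtain the core claim under \ref{assG} by the tensor-core argument of Lemma \ref{app:core}. The extra care you devote to the absorbed drift $\Gamma^0$ at the boundary $t=0$ and the characterization of $C_{00}^1([0,\infty))$ as the domain of $\partial_t^{(0,\infty)}$ is a reasonable fleshing-out of what the paper leaves as ``easy to show.''
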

\begin{proof}
    By \eqref{transitions-over}, the process $(Z_u^+)_{u\ge0}$ can be obtained using the subordination with $\sigma^0$ of the Feller process $(M_u,\Gamma_u^0)_{u\ge0}$. In other words, first take the process $(M_u,\Gamma_u^0)_{u\ge0}$ in $E\times[0,\infty)$ given by 
    \begin{align}\label{1747}
        \pr^{(x,t)}(M_u\in dy,\Gamma^0_u\in dw)=p_t(x,dy)\delta_{(t-u)\vee 0}(dw).
    \end{align}
    It is easy to show that $(M_u,\Gamma_u^0)_{u\ge0}$ is a Feller process on $E\times[0,\infty)$ and that for its generator, denoted by $(G-\partial_t^{(0,\infty)})$, it holds that
    \begin{align}
        (G-\partial_t^{(0,\infty)})h(x,t)=Gh(x,t)-\partial_t^{(0,\infty)}h(x,t),
    \end{align}
    for $h\in \DD(G)\times C_{00}^1([0,\infty))\subset\DD(G-\partial_t^{(0,\infty)})$.
    By subordinating the process $(M_u,\Gamma_u^0)_{u\ge0}$ by $\sigma^0$, we get the process with transitions
    \begin{align}
        \pr^{(x,t)}(M_{\sigma^{0}_u}\in dy,\Gamma^0_{\sigma^0_u}\in dw)&=\pr^{(x,0)}(M_{\sigma_u}\in dy,(t-\sigma_u)\vee 0\in dw)\\
        &=\pr^{(x,t)}(Z_u^+\in(dy,dw)),
    \end{align}
    where the first equality comes from \eqref{1747} and subordination, and the second one from \eqref{transitions-over}. 
    Thus, by \cite[Proposition 13.1]{bernstein} we get that $(Z_u^+)_{u\ge0}$ is a Feller process and by Phillips' theorem \cite[Theorem 13.6]{bernstein}, we get that for the generator $\mathfrak{A}^+$ of $(Z_u^+)_{u\ge0}$ it holds that
    \begin{align*}
        \mathfrak{A}^+ h(x,t)&=b(G-\partial_t^{(0,\infty)})h(x,t)+\int_0^\infty \left(\ex^{(x,0)}\left[h(M_s,(t-s)\vee0)\right]-h(x,t)\right)\nu(ds)\\
        &=b(G-\partial_t^{(0,\infty)})h(x,t)+\int_0^\infty \left(P_sh\big(\cdot,(t-s)\vee0\big)(x)-h(x,t)\right)\nu(ds),
    \end{align*}
    for all $h\in \DD(G-\partial_t^{(0,\infty)})$. If \ref{assG} holds, then $\mathfrak{A}^+$ has $C_c^\infty(\R^d)\times C_{00}^1([0,\infty))$ as an operator core by similar calculations as in Lemma \ref{app:core}.
\end{proof}

In the following section, we prove that $q(x,t) = \mathds{E}^{(x,0)}u(M_{D_t})$, $(x,t)\in E\times[0,\infty)$ solves the evolution equation \eqref{1407-a} with the operator $\mathfrak{A}^+$ as in Lemma \ref{lem:operator} in the pointwise sense. To be able to do all the calculations, we will not need assumption \ref{assG} but we will need the assumption on the subordinator $\sigma^0$, i.e. on its Laplace exponent.

\begin{assumption}{S}{}\label{assphi}
    The function $\phi$ is a special Berstein function such that $b=0$, $\nu(0,\infty)=\infty$, and its potential density $u^\phi$ satisfies $\int_0^1|\log(t)|u^\phi(t)dt<\infty$.
\end{assumption}
Let us explain what are the terms and the consequences of the assumption \ref{assphi}. The assumption that the Bernstein function $\phi$ is special means that the potential measure of the subordinator $\sigma^0$, denoted by $U(A)=\ex[\int_0^\infty\1_A(\sigma^0_s)ds]$, $A\in \BB([0,\infty))$, has a non-increasing density, denoted here by $u^\phi$, see \cite[Theorem 11.3]{bernstein}, i.e. $U(A)=\int_A u^\phi(t)dt$. The condition $b=0$ means that the subordinator $\sigma^0$ has no drift, while $\nu(0,\infty)=\infty$ means the infinite activity of a subordinator. In other words, $\sigma^0$ is not a compound Poisson process so the trajectories of $\sigma^0$ do not have flat parts almost surely.

Under the assumption \ref{assphi}, since $b=0$, the operator $\mathfrak{A}^+$ has a simpler form
\begin{align}\label{gen-final}
    \mathfrak{A}^+ h(x,t)=\int_0^\infty \left(P_sh\big(\cdot,(t-s)\vee0\big)(x)-h(x,t)\right)\nu(ds).
\end{align}
Further, under the assumption of infinite activity, $L$ has continuous trajectories and $\mathds{P}^{(x,0)}(L_t>u)=\pr^{(x,0)}(\sigma_u<t)$, while the distribution of $D$ is given by
\begin{align}\label{eq:density-D}
    \mathds{P}^{(x,0)}(D_t\in ds)=\int_0^t\nu(ds-h)U(dh),\quad s>t,
\end{align}
see \cite[Proposition 2 in Chapter III]{bertoin1996}, without an atom at $\{t\}$ since $b=0$, see \cite[Theorem 4 in Chapter III]{bertoin1996}. 

\section{The governing equation of the overshooted process}
\label{sectionmain}
In this section, we make the problem \eqref{1407} explicit and rigorous for the overshooted process introduced in the previous section.
The operator $\mathfrak{A}^+$ has, in this case, the representation \eqref{gen-final} that we will consider pointwise (as a Lebesgue integral), and does not depend on $t>0$. We study the harmonic problem
\begin{align}
\begin{cases}
    \mathfrak{A}^+ q(x,t) = 0, \qquad &(x,t) \in E \times (0, +\infty) \\
    q(x,t) =u(x), & (x,t) \in E \times \{ 0 \},
    \end{cases}
    \label{eq:problem}
\end{align}
where
\begin{align}
    \mathfrak{A}^+q(x,t) \, = \, \int_0^{+\infty} (P_sq(x,t-s) \mathds{1}_{[s<t]}+ \mathds{1}_{[s \geq t]}P_sq(x,0) -q(x,t) ) \nu (ds).
\end{align}
\begin{remark}
In the Bochner subordination theory, the operator $\mathfrak{A}^+$ is usually denoted by $-\phi(-(\partial_t^{(0,\infty)}-G))$, see \cite{SchAJM} for a rigorous argument. In a slightly different setting, where the time derivative $\partial_t^{(0,\infty)}$ was not stopped, fractional powers of the classical heat equation, i.e. $(\partial_t- \Delta)^s$ for $s\in (0,1)$, were studied in \cite{ACM18,FdP24,NS16, ST17}. However, such an operator leads to the functional initial condition (that depends on the whole history $(-\infty,0]$), see also \cite{CS14} for a more general setting.

\end{remark}
Here is our main result.
\begin{theorem}
 \label{thm:operator}
    Let $u\in \DD(G)$ and assume that \ref{assphi} is satisfied. Then a solution to \eqref{eq:problem} is given by $q(x,t)=\ex^{(x,0)}[u(M_{D_t})]$, $t\ge0$. Furthermore, this is the unique solution such that $q(x,t)$ is  jointly continuous in $E\times [0,+\infty)$ and $\lim_{t\to +\infty} q(x,t)=0$, for all $x\in E$.
\end{theorem}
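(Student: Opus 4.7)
My plan is to verify directly that $q(x,t) = \ex^{(x,0)}[u(M_{D_t})]$ satisfies the required properties and then to obtain uniqueness by an optional-stopping argument on the process $Z^+$ from Lemma \ref{lem:operator}. The initial condition $q(x,0) = u(x)$ is immediate, since $D_0 = \sigma_{L_0} = \sigma_0 = 0$ under $\pr^{(x,0)}$. Joint continuity on $E\times[0,\infty)$ and right-continuity at $t=0$ follow from the Feller continuity of $(s,x)\mapsto P_s u(x)$ combined with the continuous dependence of $\pr(D_t\in ds)$ on $t$ via the density \eqref{eq:density-D} and the regularity of $u^\phi$ ensured by \ref{assphi}.

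For the equation $\mathfrak{A}^+ q = 0$ on $E\times(0,\infty)$, my preferred route is a Dynkin-type argument. By the construction in Section \ref{sec:harmonic}, $q(x,t) = \ex^{(x,t)}_+[u(A^+_{\tau^+_0})]$ and $q(Z^+_{\tau^+_0}) = q(A^+_{\tau^+_0},0) = u(A^+_{\tau^+_0})$, so the strong Markov property of $Z^+$ yields
\begin{align*}
q(x,t) = \ex^{(x,t)}_+\bigl[q\bigl(Z^+_{\delta\wedge \tau^+_0}\bigr)\bigr], \qquad \delta\ge 0,
\end{align*}
and, since $q(Z^+_\delta) = u(M_{\sigma_\delta})$ on $\{\tau^+_0\le\delta\}$,
\begin{align*}
\tfrac{1}{\delta}\bigl(\ex^{(x,t)}_+[q(Z^+_\delta)] - q(x,t)\bigr) = \tfrac{1}{\delta}\ex^{(x,t)}_+\bigl[\bigl(u(M_{\sigma_\delta}) - u(M_{D_t})\bigr)\mathds{1}_{\{\tau^+_0\le\delta\}}\bigr].
\end{align*}
As $\delta\downarrow 0$, the left-hand side converges (pointwise) to $\mathfrak{A}^+q(x,t)$ by the subordination representation in Lemma \ref{lem:operator}. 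On the right-hand side, $(1-e^{-1})\pr(\sigma_\delta\ge t)\le 1-e^{-\delta\phi(1/t)}$ gives $\pr(\tau^+_0\le\delta) = O(\delta)$; meanwhile, the strong Markov property of $\sigma$ at $L_t$ gives $\sigma_\delta - D_t = \tilde\sigma_{\delta-L_t}$ for an independent copy $\tilde\sigma$ of $\sigma$, so Feller continuity of $M$ and dominated convergence yield $\ex^{(x,t)}_+[|u(M_{\sigma_\delta}) - u(M_{D_t})|\,\big|\,\tau^+_0\le\delta]\to 0$, and the whole expression is $o(\delta)$. A parallel analytical verification substitutes \eqref{eq:density-D} into $q$, uses
\begin{align*}
P_s q(\cdot,t-s)(x) = \int_s^t u^\phi(r-s)\int_{t-r}^{+\infty} P_{r+y}u(x)\,\nu(dy)\,dr,\qquad 0<s<t,
\end{align*}
(by the Markov property of $M$ and Fubini), and collapses $\mathfrak{A}^+q(x,t)$ to zero through the potential identity satisfied by $(u^\phi,\phi,\nu)$.

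For uniqueness, any jointly continuous solution $\widetilde q$ with $\lim_{t\to\infty}\widetilde q(x,t)=0$, bounded via a maximum principle for $\mathfrak{A}^+$ using $|u|\le \|u\|_\infty$ on $E\times\{0\}$ and the decay at infinity, produces a bounded $\pr^{(x,t)}_+$-martingale $(\widetilde q(Z^+_{\delta\wedge\tau^+_0}))_{\delta\ge 0}$ (since $\mathfrak{A}^+\widetilde q = 0$ on $E\times(0,\infty)$); because $\tau^+_0 = L_t<\infty$ a.s., optional stopping and the boundary data give $\widetilde q(x,t) = \ex^{(x,t)}_+[u(A^+_{\tau^+_0})] = q(x,t)$.

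The hardest step is the pointwise derivation of $\mathfrak{A}^+q=0$: since $q$ is not a priori in a $C_0$-operator core of $\mathfrak{A}^+$, both the Dynkin and the direct-computation routes require delicate exchange of limits and integrals near the singularities of $\nu$ and $u^\phi$ at $0$. This is precisely where \ref{assphi} — the log-integrability $\int_0^1|\log t|\,u^\phi(t)\,dt<\infty$, zero drift, and infinite activity (ensuring continuity of $L$ and absence of an atom in the law of $D_t$) — enters essentially.
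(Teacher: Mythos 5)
Your proposal takes a genuinely different route — a probabilistic Dynkin/optional-stopping argument — whereas the paper works entirely through the Laplace transform in the time variable and a direct pointwise maximum principle. Unfortunately, the probabilistic route as you sketch it has a real gap that you acknowledge but do not close, and it is precisely the gap that motivates the paper's choice of method.

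The central issue is the claim that, as $\delta\downarrow 0$, the left-hand side $\delta^{-1}\bigl(\ex^{(x,t)}_+[q(Z^+_\delta)]-q(x,t)\bigr)$ converges pointwise to $\mathfrak{A}^+q(x,t)$, where $\mathfrak{A}^+$ is understood as the \emph{pointwise Lebesgue-integral} operator \eqref{gen-final}. This convergence is automatic only if $q$ lies in the $C_0$-domain of the abstract generator $\mathfrak{A}^+$, which it does not a priori; the paper emphasizes that it uses the pointwise version of $\mathfrak{A}^+$ precisely for this reason. You flag this as ``the hardest step'' and say it ``requires delicate exchange of limits and integrals'' — but your argument does not carry out that exchange, and the estimates you offer ($\pr(\tau^+_0\le\delta)=O(\delta)$ plus conditional Feller continuity) only identify the right-hand side as $o(1)$; they do not justify the identification of the left-hand side with the operator applied pointwise. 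The paper avoids this obstacle completely: it first establishes, via the regularity statements of Lemma \ref{l:aux-G-q(t)} (in particular the absolute-continuity bound $\|\partial_t q(\cdot,t)\|_\infty\lesssim u^\phi(t)$, which is where the log-integrability of $u^\phi$ in \ref{assphi} actually enters), that each piece of the decomposition $\mathfrak{A}^+ q = J_0+J_1+J_2+J_3$ is finite and Laplace-transformable; then it proves $\widetilde{\mathfrak{A}^+q}(\lambda)=0$ by explicit computation using the densities of $D_t$ and $\sigma_{L_t-}$; and finally it upgrades ``$=0$ for a.e. $t$'' to ``$=0$ for all $t>0$'' by dominated convergence and continuity of $t\mapsto\mathfrak{A}^+q(x,t)$. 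Your ``parallel analytical verification'' points in this direction but is only a one-line sketch of the potential identity.

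Your uniqueness argument has the same structural gap. Applying optional stopping to $\bigl(\widetilde q(Z^+_{\delta\wedge\tau^+_0})\bigr)_{\delta\ge 0}$ presupposes that this process is a bounded martingale, which would follow from Dynkin's formula if $\widetilde q$ were in the domain of the abstract generator — but the theorem assumes only that $\widetilde q$ solves the pointwise equation \eqref{eq:problem} and is jointly continuous with decay at infinity. The paper instead gives a short, self-contained maximum-principle argument: if $\overline q=q-\widetilde q\not\equiv 0$ has a positive maximizer $(x_\star,t_\star)$ with $t_\star>0$ (which exists by joint continuity, the vanishing initial data, and decay at infinity), then splitting the integral defining $\mathfrak{A}^+\overline q(x_\star,t_\star)$ at $s=t_\star$ yields $\mathfrak{A}^+\overline q(x_\star,t_\star)\le -\overline q(x_\star,t_\star)\,\nu[t_\star,\infty)<0$, using that $\nu[t_\star,\infty)>0$ because $\phi$ is special (infinite support). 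This requires no stochastic calculus and makes the use of \ref{assphi} explicit. To make your route rigorous you would need to prove the pointwise-to-Dynkin convergence and the martingale property under the stated (pointwise) hypotheses — which is essentially equivalent to redoing the paper's regularity analysis.
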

The proof of this theorem is given in Section \ref{proofmain} where it will be split in several steps.
For the proof, the regularity of the involved quantities will be crucial (proved in the next lemma) as well as the Laplace transform techniques. Here, for any function $t\mapsto f(t)$, $t \in [0, +\infty)$, with values in some Banach space, we will denote the Laplace transform of $f$ by
\begin{align}
    \widetilde{f}(\lambda) \coloneqq \int_0^{+\infty} f(t) \, e^{-\lambda t} \, dt \coloneqq \lim_{\tau \to +\infty} \int_0^\tau f(t) \, e^{-\lambda t} \, dt,
    \label{1210}
\end{align}
where the integral should be understood in the Bochner sense (i.e. it is the Lebesgue integral for real-valued functions). We suggest the monograph \cite{abhn} for the main properties of the Laplace transform.
\begin{lemma}\label{l:aux-G-q(t)}
	Under the assumptions of Theorem \ref{thm:operator} the following claims hold:
	\begin{enumerate}[label=(\alph*)]
		\item\label{l-item-1} For all $t \geq 0$, it is true that $q(\cdot,t)\in \DD(G)$,  $Gq(x,t)=\ex^{(x,0)}[Gu(Y_t)]$ for all $x \in E$, and
		\begin{align}
			\|G q(\cdot,t)\|_\infty\le \|Gu\|_\infty,\quad t>0.
		\end{align}
		\item\label{l-item-2}
		For all $\lambda >0$, $\wt q(\cdot,\lambda)\in \DD(G)$, and  for all $x \in E$ it is true that $G \wt q(x,\lambda)=\wt{Gq(x ,\lambda)}$.
		\item\label{l-item-3} The function $[0, +\infty) \ni t\mapsto q(x,t)$ is absolutely continuous for all $x \in E$ and there exists $C=C(\phi)>0$ such that, for almost all $t>0$, 
		\begin{align}
			\|\partial_tq(\cdot,t)\|_\infty \le C\big( \|u\|_\infty+\|Gu\|_\infty\big)u^\phi(t),
		\end{align}
		 where $u^\phi(t)$ is the potential density of $\sigma$.		
	\end{enumerate}
\end{lemma}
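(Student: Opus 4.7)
For parts (a) and (b), the strategy is the standard closedness argument for $G$. Independence of $M$ and $\sigma^0$ combined with conditioning on $D_t$ gives
$$q(x,t) = \mathds{E}^{(x,0)}\bigl[u(M_{D_t})\bigr] = \int_{[t,+\infty)} P_s u(x)\, \mathds{P}^{(x,0)}(D_t \in ds),$$
which realises $q(\cdot,t) \in C_0(E)$ as a probability average of elements of $\mathcal{D}(G)$. Since $G$ commutes with $P_s$ on its domain and $\|P_s G u\|_\infty \le \|Gu\|_\infty$, the closedness of $(G, \mathcal{D}(G))$ yields $q(\cdot, t) \in \mathcal{D}(G)$ with $G q(x,t) = \mathds{E}^{(x,0)}[Gu(M_{D_t})]$ and $\|Gq(\cdot, t)\|_\infty \le \|Gu\|_\infty$, proving (a). Part (b) follows by applying the same closedness argument to the Bochner integral $\widetilde q(\cdot, \lambda) = \int_0^{+\infty} e^{-\lambda t} q(\cdot, t)\, dt$: the uniform $L^\infty$-bounds from (a) guarantee absolute convergence of both $\widetilde q(\cdot, \lambda)$ and $\widetilde{Gq}(\cdot, \lambda)$ for $\lambda > 0$, and closedness of $G$ permits the commutation.

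The substance is (c). Starting from the telescoping identity $P_s u(x) - u(x) = \int_0^s P_r Gu(x)\, dr$ and conditioning on $D_t$, splitting the $r$-integral at $t$ produces the key representation
$$q(x,t) = P_t u(x) + \int_t^{+\infty} P_r Gu(x)\, \mathds{P}^{(x,0)}(D_t > r)\, dr.$$
By \eqref{eq:density-D} one has $\mathds{P}(D_t > r) = \int_0^t u^\phi(h)\, \bar\nu(r-h)\, dh$ for $r \ge t$, and under \ref{assphi} ($b = 0$ and infinite activity) $\mathds{P}(D_t > t) = 1$; equivalently $u^\phi * \bar\nu \equiv 1$, which follows from $\widetilde{u^\phi}(\lambda) = 1/\phi(\lambda)$ and $\widetilde{\bar\nu}(\lambda) = \phi(\lambda)/\lambda$ (the latter via \eqref{reprphibar}). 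Differentiating in $t$, the boundary contribution $-P_t Gu(x)\cdot \mathds{P}(D_t > t)$ exactly cancels $\partial_t P_t u(x) = P_t Gu(x)$, and the interior derivative $\partial_t \mathds{P}(D_t > r) = u^\phi(t) \bar\nu(r - t)$ for $r > t$ produces
$$\partial_t q(x,t) = u^\phi(t) \int_0^{+\infty} P_{t+s} G u(x)\, \bar\nu(s)\, ds.$$

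To obtain the stated bound, I would recast this expression by writing $\bar\nu(s) = \int_s^{+\infty} \nu(dw)$ and interchanging the order of integration (justified as a limit of truncated integrals using dominated convergence), yielding the absolutely convergent form
$$\partial_t q(x,t) = u^\phi(t) \int_0^{+\infty} \bigl(P_{t+w} u(x) - P_t u(x)\bigr)\, \nu(dw).$$
The elementary estimate $|P_{t+w} u(x) - P_t u(x)| \le \min(w \|Gu\|_\infty, 2\|u\|_\infty)$ together with the L\'evy integrability $\int_0^{+\infty} (w \wedge 1)\, \nu(dw) < +\infty$ then gives $\|\partial_t q(\cdot, t)\|_\infty \le C(\phi)(\|u\|_\infty + \|Gu\|_\infty)\, u^\phi(t)$ with an explicit $C(\phi)$ depending only on $\int_0^1 w\, \nu(dw)$ and $\nu(1, +\infty)$. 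Local integrability of $u^\phi$ (namely $\int_0^T u^\phi(t)\, dt = U(T) < +\infty$) then gives absolute continuity of $t \mapsto q(x,t)$.

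The main obstacle I expect is rigorously justifying the differentiation under the integral in the identity for $\partial_t q$: since $u^\phi(h)$ may blow up as $h \to 0^+$ and $\bar\nu(r-h)$ may blow up as $h \to t^-$, uniform domination of the parametric integrand on a neighborhood of $t$ is not immediate. A clean alternative is the Laplace-transform route: compute $\widetilde q(x, \lambda)$ directly from \eqref{eq:density-D} (which reduces to algebraic manipulations involving $\phi$ and the resolvent of $G$), compute $\widetilde F(x, \lambda)$ for the candidate $F(x,t) = u^\phi(t) \int_0^{+\infty}(P_{t+w} u - P_t u)\, \nu(dw)$, and verify $\lambda \widetilde q(x, \lambda) - u(x) = \widetilde F(x, \lambda)$. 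Together with continuity of $t \mapsto q(x,t)$ and local integrability of $F$, uniqueness of the Laplace transform then yields $q(x,t) = u(x) + \int_0^t F(x,s)\, ds$, giving both the absolute continuity and the pointwise identity $\partial_t q = F$ almost everywhere.
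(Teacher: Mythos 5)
Your parts (a) and (b) are correct and take essentially the same view as the paper: the paper establishes $q(\cdot,t)\in\DD(G)$ by writing $q(x,t)=\int_{(t,\infty)}P_su(x)\,\pr^{(x,0)}(D_t\in ds)$, applying the difference quotient $(P_h-\II)/h$ inside the integral, and passing to the limit with dominated convergence; your closedness--of--$G$ / Bochner--integral phrasing is an equivalent packaging of the same fact. No substantive difference there.

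For part (c), however, you take a genuinely different route. You telescope $P_su=u+\int_0^sP_rGu\,dr$ and condition on $D_t$ to get the representation $q(x,t)=P_tu(x)+\int_t^{\infty}P_rGu(x)\,\pr^{(x,0)}(D_t>r)\,dr$, formally differentiate using $\pr(D_t>r)=\int_0^t u^\phi(h)\bar\nu(r-h)\,dh$ and $\pr(D_t>t)=1$, and arrive at the explicit candidate
\[
\partial_t q(x,t)=u^\phi(t)\int_0^{\infty}\bigl(P_{t+w}u(x)-P_tu(x)\bigr)\nu(dw),
\]
from which the stated $L^\infty$-bound is immediate. This is more direct and more informative than the paper's argument, which never exhibits a formula for $\partial_tq$: the paper instead applies Dynkin's formula to truncations $u_N=u\cdot\eta_N$, imports quantitative difference-quotient bounds from \cite{ascione2024}, and uses the Dunford--Pettis theorem to extract a weakly convergent subsequence in $L^1(0,T)$, identifying the weak limit as the distributional derivative and invoking the $W^{1,1}$ characterization of absolute continuity. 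The trade-off is exactly the one you flag yourself: your primary route requires justifying differentiation under the integral where $u^\phi$ may blow up near $0$ and $\bar\nu(r-h)$ near $h=t$, and more fundamentally, even if the pointwise a.e.\ derivative exists and is bounded by $Cu^\phi(t)\in L^1_{\mathrm{loc}}$, that alone does not yield absolute continuity (a pointwise derivative bound does not imply the fundamental theorem of calculus). Your proposed fallback --- verifying $\lambda\widetilde q(\lambda)-u=\widetilde F(\lambda)$ for the candidate $F$ and invoking uniqueness of the Laplace transform together with continuity of $q$ and local integrability of $F$ --- is a sound and arguably cleaner mechanism that sidesteps both issues, and it is in fact quite close in spirit to the Laplace-transform manipulations the paper already carries out in the proof of Theorem \ref{thm:operator}; but you do not carry it out, and that computation is the actual substance. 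As written, part (c) is an outline with a correctly diagnosed gap rather than a complete proof, whereas the paper's Dunford--Pettis route, while heavier and less explicit, closes the argument.
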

The previous Lemma will be proved in Appendix \ref{auxiliary}.
\subsection{Proof of Theorem \ref{thm:operator}}
\label{proofmain}
The proof of the Theorem is split into the following steps.
\begin{enumerate}[label=(\roman*)]
    \item \label{it1} We first show that $\mathfrak{A}^+q(x,t)$ is well defined, i.e., the integral converges for all $(x,t) \in E \times [0, +\infty)$, and it is Laplace transformable.
    \item \label{it2} We then show that
    \begin{align}
        \widetilde{\mathfrak{A}^+q(x,\lambda)} \, = \, 0
    \end{align}
    for all $\lambda >0$ and $x \in E$.
    \item \label{it3} We extend the result obtained in the previous item to our claim about existence by Laplace inversion and continuity arguments.
    \item We prove the uniqueness of the solution.
\end{enumerate}
 For the sake of brevity, in the following calculations, we leave out the space variable $x$ from the calculations wherever unambiguous. This means, for example, that by using the notation \eqref{1210}, the Laplace transform of $q(x,t)$ will be denoted by $\lambda\mapsto\wt q(x,\lambda)=\wt q(\lambda)$.

Here is the proof of \ref{it1}. Note that
\begin{align}
    \mathfrak{A}^+ q(x,t) \, = \, & \int_{[t, {+\infty})} P_sq(x,0) \nu(ds) + \int_{0}^\infty\big(P_sq(\cdot,t-s)(x)\1_{\{t > s\}}-q(x,t)\big)\nu(ds) \\ = \, &\int_{[t, {+\infty})} P_sq(x,0) \nu(ds)+ \int_{0}^t\big(P_sq(\cdot,t)(x)-q(x,t)\big)\nu(ds)\\
    	&+\int_{0}^t\big(P_sq(\cdot,t-s)(x)-P_sq(\cdot,t)(x)\big)\nu(ds)-\int_{[t, +\infty)} q(x,t)\nu(ds)\\
    	&\eqqcolon J_0+J_1+J_2+J_3,
        \label{1216}
\end{align}
    and now we show that all terms are well defined for all $x$ and are Laplace transformable. For $J_0$ we can see
    \begin{align}
       \int_{[t, +\infty)} \left\| P_sq(\cdot,0)(x) \right\|_{\infty} \nu(ds)\, \leq \, \left\| u \right\|_{\infty} \nu[t, +\infty),
    \end{align}
    where we used that $P_s$ is a Feller semigroup and thus $\left\| P_su \right\|_\infty \leq \left\| u \right\|_\infty$. It follows that the integral is convergent and,  by \eqref{reprphibar}, that it has the Laplace transform for all $\lambda >0$. The Laplace transform is
    \begin{align}\label{eq:1743}
    	\int_0^\infty e^{-\lambda t}\int_{[t, +\infty)} P_su\nu(ds)dt= \, &\int_0^\infty e^{-\lambda t}\int_{(t, +\infty)} P_su \, \nu(ds)dt \notag \\ = \, &\int_0^\infty P_su\left(\frac{1-e^{-\lambda s}}{\lambda}\right)\nu(ds),
    \end{align}
    where we moved from $[t,+\infty)$ to $(t,+\infty)$ by recalling that the L\'evy measure $\nu$ has at most countable number of atoms.

        For $J_1$ we note that, for $u \in \mathcal{D}(G)$, one has that
    \begin{align}\label{eq:J1-bound}
    	|P_sq(\cdot,t)(x)-q(x,t)|\le \min\{2\|u\|_\infty,s\|Gu\|_\infty\}\le c\,(1\wedge s),
    \end{align}
    for all $t>0$, and $x\in E$, where the inequalities come from \cite[Eq. (13.3)]{bernstein} and from Lemma \ref{l:aux-G-q(t)}\ref{l-item-1}. Hence, $J_1$ is well defined, i.e., the integral is convergent for any $x$, and Laplace transformable for all $\lambda>0$.

   For $J_2$, by Lemma \ref{l:aux-G-q(t)}\ref{l-item-3}, we get for all $t>0$ and $x\in E$  that
    \begin{align}
    	|P_sq(\cdot,t-s)(x)-P_sq(\cdot,t)(x)|&\le \left|P_s\left(\int_{t-s}^t\partial_h q(\cdot,h)dh\right)(x)\right|\\
    	&\le C\min\{\|u\|_\infty,\|Gu\|_\infty\} \int_0^s u^\phi(t-h)dh,\label{eq:J2-bound}
    \end{align}
    and therefore
    \begin{align}
        \int_0^t |P_sq(\cdot,t-s)(x)-&P_sq(\cdot,t)(x)|  \nu(ds)\\
        \, \leq \,&C\min\{\|u\|_\infty,\|Gu\|_\infty\} \int_0^t \int_0^s u^\phi(t-h)\, dh \, \nu(ds) \notag \\
        = \, & C\min\{\|u\|_\infty,\|Gu\|_\infty\} \int_0^t u^\phi(t-h) \int_h^t \nu(ds) dh \notag \\
        \leq \, & C\min\{\|u\|_\infty,\|Gu\|_\infty\} \int_0^t u^\phi(t-h) \nu(h, +\infty) \notag \\
        = \, & C\min\{\|u\|_\infty,\|Gu\|_\infty\}
    \end{align}
    where we used that
    \begin{align}
        \int_0^t u^\phi(t-h) \nu(h, +\infty) \, dh \, = \, 1,
    \end{align}
    as it follows, for example, from \cite[Theorem 11.9 \& Eq. (11.12)]{bernstein} or \cite[Proposition 2 in Chapter III]{bertoin1996}.
    Therefore $J_2$ is well defined and Laplace transformable for all $\lambda>0$. 
    
    For $J_3$ we note that for all $t>0$ and $x\in E$ we  have
    \begin{align}\label{eq:J3-bound}
    	\int_{[t, \infty)} |q(x,t)|\nu(ds)&\le \|u\|_\infty 	\overline \nu[t, +\infty)
    \end{align}
    so $J_3$ is Laplace transformable for all $\lambda>0$ by using \eqref{reprphibar}, see also the comment below \eqref{eq:1743}. This proves that $\mathfrak{A}^+ q(x,t)$ is convergent for any $x$ and $t$ and admits the Laplace transform for all $\lambda>0$.
    
Now we prove Item \ref{it2}.
By a simple manipulation, using Fubini's theorem, we get that the Laplace transform of $\mathfrak{A}^+ q(x,t)$ is
    \begin{align}
        &\int_0^\infty e^{-\lambda t}\mathfrak{A}^+ q(t)dt \notag \\ = \, & \int_0^\infty e^{-\lambda t}\int_{[t, +\infty)} P_su\nu(ds)dt +\int_0^\infty e^{-\lambda t}\int_{0}^\infty\big(P_sq(t-s)\1_{\{t> s\}}-q(t)\big)\nu(ds)dt \notag \\
        = \, & \int_0^{+\infty} P_s u \left( \frac{1-e^{-\lambda s}}{\lambda} \right) \nu(ds) + \int_0^\infty e^{-\lambda t}\int_{0}^\infty\big(P_sq(t-s)\1_{\{t> s\}}-q(t)\big)\nu(ds)dt,
        \label{1212}
        \end{align}
where we used \eqref{eq:1743}. The second term in \eqref{1212} is
            \begin{align}
      & \int_0^\infty e^{-\lambda t}\int_{0}^\infty\big(P_sq(t-s)\1_{\{t> s\}}-q(t)\big)\nu(ds)dt  \notag \\ = \, &\int_0^\infty \int_{0}^\infty e^{-\lambda t}\big(P_sq(t-s)\1_{\{t\ge s\}}-q(t)\big)dt\,\nu(ds)\notag \\
        = \, &\int_0^\infty \big(e^{-\lambda s}P_s\wt q(\lambda)-\wt q(\lambda)\big)\nu(ds)\label{eq:1619}.
    \end{align}
    Now we compute $\wt q (\lambda)$ and then we implement it into \eqref{eq:1619}. By using the density of the overshooting \eqref{eq:density-D}, we have that
     \begin{align}
        \wt q(\lambda)&=\int_0^\infty e^{-\lambda t}\int_0^t\int_{t-h}^\infty P_{s+h}u\,\nu(ds)U(dh)dt\\
        &=\int_0^\infty \int_0^\infty P_{s+h}u \left(\int_h^{s+h}e^{-\lambda t}dt\right)\nu(ds)U(dh)\\
        &=\int_0^\infty \int_0^\infty e^{-\lambda h}P_{s+h}u \left(\frac{1-e^{-\lambda s}}{\lambda}\right)\nu(ds)U(dh),\label{eq:Laplace-q}
    \end{align}
    where in the second equality we used Fubini's theorem (since $\|P_t u\|_\infty\le \|u\|_\infty$). Now we implement this into \eqref{eq:1619}. For the calculation below, it is important to note that $(e^{-\lambda s}P_s)_{s \geq 0}$ is a semigroup with the infinitesimal generator $G-\lambda \II$, with the domain $\DD(G)$. Hence, 
    \begin{align}
       &  \int_0^\infty e^{-\lambda t}\mathfrak{A}^+ q(t)dt \notag \\  = \, &\int_0^{+\infty} P_s u \left( \frac{1-e^{-\lambda s}}{\lambda} \right) \nu(ds) +\int_0^\infty \int_0^s e^{-\lambda t}P_t(G-\lambda \II)\wt q(\lambda)dt\,\nu(ds)\\
        = \, & \int_0^{+\infty} P_s u \left( \frac{1-e^{-\lambda s}}{\lambda} \right) \nu(ds) + \int_0^\infty e^{-\lambda t}P_t(G-\lambda \II)\wt q(\lambda)\overline\nu(t)dt.\label{eq:1529}
    \end{align}
    Note that in the third equality we used Fubini's theorem which is justified since $u\in \DD(G)$ by the following argument. Indeed, by Lemma \ref{l:aux-G-q(t)}\ref{l-item-2} $\wt q(\lambda)\in \DD(G)$, so $P_t\wt q(\lambda)\in \DD(G)$. Further, $P_t$ and $G$ commute on $\DD(G)$ which gives us the bound
    \begin{align*}
    	|P_t(G-\lambda \II)\wt q(\lambda)(x)|\le \|(G-\lambda \II)\wt q(\lambda)\|_\infty\le \|G\wt q(\lambda)\|_\infty+\lambda\|\wt q(\lambda)\|_\infty<\infty.
    \end{align*}
 By implementing \eqref{eq:Laplace-q} into the second term in \eqref{eq:1529}, we obtain
    \begin{align}
        &\int_0^\infty\int_0^\infty \int_0^\infty e^{-\lambda t}P_t(G-\lambda \II)e^{-\lambda h}P_{s+h}u \left(\frac{1-e^{-\lambda s}}{\lambda}\right)\nu(ds)U(dh) \overline\nu(t)dt\\
        &=\int_0^\infty P_s\left(\int_0^\infty \int_0^\infty e^{-\lambda (h+t)}P_{t+h}(G-\lambda \II)u\, U(dh) \overline\nu(t)dt\right)\left(\frac{1-e^{-\lambda s}}{\lambda}\right)\nu(ds).\label{eq:1536}
    \end{align}
    Now we prove that the inner integral in \eqref{eq:1536} is equal to $-u$:
    \begin{align}
        \int_0^\infty \int_0^\infty &e^{-\lambda (h+t)}P_{t+h}(G-\lambda \II)u\, U(dh) \overline\nu(t)dt\\
        &=\int_0^\infty \int_h^\infty (G-\lambda \II)e^{-\lambda v}P_{v}u\,  \overline\nu(v-h)dv\,U(dh)\\
        &=\int_0^\infty e^{-\lambda v}P_{v}(G-\lambda \II)u\,  \int_0^v\overline\nu(v-h)U(dh)dv\\
        &=\int_0^\infty e^{-\lambda v}P_{v}(G-\lambda \II)u\,dv=-u.\label{eq:1545}
    \end{align}
    In the second line we used the change of variables, in the third Fubini's theorem, in the forth the fact that $\overline\nu(v-h)U(dh)$ is the distribution of $\sigma_{L_t-}$ (see \cite[Proposition 2 in Chapter III]{bertoin1996}), and in the final line the resolvent property $(\lambda \II-G)^{-1}=\int_0^\infty e^{-\lambda v}P_{v}dv$.
    By implementing \eqref{eq:1545} into \eqref{eq:1536} and then in \eqref{eq:1529}, we obtain 
    \begin{align}
    	\int_0^\infty e^{-\lambda t} \mathfrak{A}^+ q(x,t)dt=0,
        \label{1202}
    \end{align}
    for every $x \in E$. 
    
    Now we prove step \ref{it3}. It follows from \eqref{1202} that for any $x$ there exists a null set $I_x \subset (0, +\infty)$ such that $\mathfrak{A}^+q(x,t) =0$ for all $t \notin I_x$. We proceed to show that $\cup_x I_x = \emptyset$. Note that the decomposition \eqref{1216} and the already obtained bounds \eqref{eq:J1-bound}, \eqref{eq:J2-bound} and \eqref{eq:J3-bound}, allow us to use the dominated convergence theorem to get that  $(0, +\infty) \ni t \mapsto \mathfrak{A}^+ q(x,t)$ is continuous for any $x \in E$. This implies that $I_x = \emptyset$ for all $x \in E$.

To prove uniqueness, first notice that $q(x,t)=\ex^{(x,0)}[u(M_{D_t})]=\ex[P_{D_t}u(x)]$ so $q(x,t)$ is jointly continuous in $E\times [0,+\infty)$. This can be seen by the dominated convergence theorem since $(s,x)\mapsto P_su(x)$ is jointly continuous, and $D_t$ is right-continuous such that at fixed time $t\in [0,+\infty)$ the probability of jump is zero. Moreover, by a similar argument, we get $\lim_{t\to+\infty}q(x,t)=0$.

Let now $q_2(x,t)$  be another solution to \eqref{eq:problem} which is jointly continuous and decaying at infinity. Assume that $\overline q=q-q_2\not\equiv 0$, and, without loss of generality, that there exists $(x_\star,t_\star)\in E\times [0,+\infty)$ such that $\overline q(x_\star,t_\star)=\sup_{\{x\in E,\,t\ge 0\}}\overline q(x,t)>0$. The initial condition implies $\overline q(\cdot,0)\equiv0$, so we have $t_\star>0$. Now we proceed to show that $\mathfrak{A}^+ \overline q(x_\star,t_\star)<0$, which is a contradiction with the assumption that $q_2$ is a solution.

To this end, note that $P_s \overline q(\cdot,t_\star-s)(x)\le \overline q(x_\star,t_\star)$ for $(x,s)\in E\times [0,t_\star]$. Thus,
\begin{align*}
    \mathfrak{A}^+ \overline q(x_\star,t_\star)&=\int_0^{t_\star}\left( P_s \overline q(\cdot,t_\star-s)(x_\star)\1_{s<t_\star}-\overline q(x_\star,t_\star)\right)\nu(ds) \\
    &\hspace{4em} + \int_{[t_\star,+\infty)} \left(P_s \overline q(\cdot,0)(x_\star)-\overline q(x_\star,t_\star)\right)\nu(ds)\\
    &\le -\overline q(x_\star,t_\star) \nu[t_\star,+\infty)<0.
\end{align*}
Here, $\nu[t_\star,+\infty)>0$ since we assume \ref{assphi}, or more precisely that $\phi$ is a special Bernstein function, see \cite[Proposition 11.16]{bernstein}.
\qed

The proof of the uniqueness above is in essence the proof of the positive maximum principle.
\begin{proposition}[Positive maximum principle]
    Let $f:E\times [0,\infty)\to \R$ and $(x_\star,t_\star)\in E\times [0,+\infty)$ be such that $f(\cdot,0)\in C(E)$, $f(x_\star,t_\star)\ge f(x,t)$, $(x,t)\in E\times (0,t_\star]$, and $f(x_\star,t_\star)> f(x,0)$, $x\in E$, and that $\mathfrak{A}^+f(x_\star,t_\star)$ is well defined pointwisely. Then
    \begin{align*}
        \mathfrak{A}^+f(x_\star,t_\star)<0.
    \end{align*}
\end{proposition}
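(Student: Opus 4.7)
The plan is to adapt, in slightly greater generality, the final paragraph of the uniqueness argument in the proof of Theorem \ref{thm:operator}. Setting $c := f(x_\star, t_\star)$ and using the pointwise representation \eqref{gen-final} of $\mathfrak{A}^+$, I would split
\begin{align*}
\mathfrak{A}^+ f(x_\star, t_\star) &= \int_{(0, t_\star)} \bigl(P_s f(\cdot, t_\star - s)(x_\star) - c\bigr)\, \nu(ds)\\
&\quad + \int_{[t_\star, +\infty)} \bigl(P_s f(\cdot, 0)(x_\star) - c\bigr)\, \nu(ds),
\end{align*}
and show that the first summand is non-positive while the second is strictly negative.

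For the first integral, the hypothesis $f \le c$ on $E \times (0, t_\star]$ yields $f(\cdot, t_\star - s) \le c$ pointwise for every $s \in (0, t_\star)$, so writing
\begin{align*}
P_s f(\cdot, t_\star - s)(x_\star) - c = \int_E \bigl(f(y, t_\star - s) - c\bigr)\, p_s(x_\star, dy) + c\bigl(p_s(x_\star, E) - 1\bigr),
\end{align*}
the first term is non-positive since the integrand is, and the second is non-positive (immediately in the conservative case, and in the genuinely sub-Markov case under the $c \ge 0$ reduction customary for positive maximum principles). For the second integral, the hypothesis sharpens to $f(\cdot, 0) < c$ pointwise on $E$ with $f(\cdot, 0) \in C(E)$; hence $f(\cdot, 0) - c$ is a strictly negative continuous function on $E$, and integrating it against the non-trivial sub-probability kernel $p_s(x_\star, \cdot)$ produces a strictly negative value, so that $P_s f(\cdot, 0)(x_\star) - c < 0$ at every $s$ for which $p_s(x_\star, \cdot)$ does not vanish identically.

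Combining the two pieces and using $\nu[t_\star, +\infty) > 0$, which follows from Assumption \ref{assphi} on $\phi$ being a special Bernstein function (cf.~\cite[Proposition 11.16]{bernstein}, as already exploited in the uniqueness argument), the contribution of the second integral is strictly negative while that of the first is non-positive, and the desired strict inequality $\mathfrak{A}^+ f(x_\star, t_\star) < 0$ follows. The main delicate step is the passage from strict pointwise negativity of $f(\cdot, 0) - c$ to strict negativity of its integral against $p_s(x_\star, \cdot)$: in the conservative case this is automatic, while in the sub-Markov case one needs only to verify that the set of $s \in [t_\star, +\infty)$ at which $p_s(x_\star, \cdot)$ is not the zero measure has positive $\nu$-measure, which is guaranteed for any non-degenerate Feller process. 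All remaining steps reproduce computations already carried out in the uniqueness half of Theorem \ref{thm:operator}.
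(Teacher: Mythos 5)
Your proof uses the same decomposition into the $(0,t_\star)$ and $[t_\star,+\infty)$ integrals, and handles the first piece the same way, but you take a genuinely simpler route to the strict negativity of the second piece. The paper, in the conservative case, invokes the Feller property to extract a compact $K\subset E$ with $\pr^{x_\star}(M_s\in K)>\varepsilon$ uniformly for $s$ in an interval $[a,b]\subset [t_\star,+\infty)$ of positive $\nu$-mass, and then uses continuity of $f(\cdot,0)$ on $K$ to obtain a uniform gap $\overline\varepsilon>0$, yielding the quantitative bound $-\varepsilon\overline\varepsilon\,\nu[a,b]$. In the sub-Markov case the paper argues separately via the killing probability. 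You instead observe that $f(\cdot,0)-f(x_\star,t_\star)$ is everywhere strictly negative, so its integral against the non-trivial kernel $p_s(x_\star,\cdot)$ is already strictly negative for every admissible $s$, and then conclude by the elementary measure-theoretic fact that a $\nu$-a.e.\ strictly negative integrand, integrated against a measure of positive total mass, gives a strictly negative number. This avoids the tightness/compactness argument altogether, uses less regularity of $f(\cdot,0)$ (measurability suffices for this step), and unifies the conservative and sub-Markov cases, at the cost of being non-quantitative. You also correctly flag the $c\ge 0$ (in fact $c>0$ in the degenerate sub-Markov case where $p_s(x_\star,\cdot)$ eventually vanishes) normalization that the paper also implicitly uses but does not spell out. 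Both approaches rely on the same two external ingredients: $\nu[t_\star,+\infty)>0$ from \cite[Proposition 11.16]{bernstein}, and the strict inequality $f(\cdot,0)<f(x_\star,t_\star)$.
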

\begin{proof}
    The only non-trivial part with respect to the end of the proof of Theorem \ref{thm:operator} is to prove
    \begin{align}\label{1802}
        \int_{[t_\star,+\infty)} \left(P_s f(\cdot,0)(x_\star)-f(x_\star,t_\star)\right)\nu(ds)<0.
    \end{align}
    Assume first that $M$ is conservative, i.e. $\pr_x(M_s\in E)=1$, $x\in E,\,s\ge0$. Since $\phi$ is special, there is $[a,b]\subset [t_\star,+\infty)$ such that $\nu[a,b]>0$, see \cite[Proposition 11.16]{bernstein}. Further, it is easy to prove by contradiction that there is $\varepsilon>0$ and a compact $K\subset E$ such that $\pr^{x_\star}(M_s \in K)>\varepsilon$, $s\in [a,b]$, since $(M_u)_u$ is a Feller process. Hence, from the continuity of $f(\cdot,0)$ we get that there is $\overline \varepsilon>0$ such that $f(x,0)+\overline \varepsilon<f(x_\star,t_\star)$, $x\in K$, so for $s\in [a,b]$ we get
    \begin{align*}
        P_s f(\cdot,0)(x_\star)-f(x_\star,t_\star)&= \int_{E} \big(f(y,0)-f(x_\star,t_\star)\big)p_s(x_\star,dy)\\
        &=\left(\int_K+\int_{K^c}\right)\big(f(y,0)-f(x_\star,t_\star)\big)p_s(x_\star,dy)< -\varepsilon\overline \varepsilon,
    \end{align*}
    which gives \eqref{1802}.

    In the case when $M$ can die, the calculation is even simpler
    \begin{align*}
        P_sf(\cdot,0)(x_\star)-f(x_\star,t_\star)&=P_s\left(f(\cdot,0)-f(x_\star,t_\star)\right)(x_\star)\\
        &\hspace{4em}-f(x_\star,t_\star)\pr_{x_\star}(\text{$M$ is dead at time $s$})\\
        &\le f(x_\star,t_\star)\int_{t_\star}^\infty\pr_{x_\star}(\text{$M$ is dead at time $s$})\nu(ds)<0,
    \end{align*}
    where the strict inequality holds since $s\to \pr_{x_\star}(\text{$M$ is dead at time $s$})$ is monotone with a strictly positive limit at infinity, and \cite[Proposition 11.16]{bernstein}.
    \end{proof}

\section{Harmonic approach in action} 
\label{sec:ex}
The approach based on generalized harmonic functions introduced in Section \ref{sec:harmonic} is suitable for generalization and further usage to determine the governing equation of other CTRWs limit processes. The `algorithm' should be the following. Given a CTRW with waiting times and jumps $(J_n, W_n)$, the first step is to identify the limit $(A_u, \sigma_u)$ and the form of its generator $\mathcal{A}$ (see \eqref{genctrwlim}). The second step is to determine the transition probabilities $p^+$ and $p^-$ defined in \eqref{transitionover} and \eqref{transitionunder} (using the simplified form \eqref{transitionover-NO2} and \eqref{transitionunder-NO2} in the Markov additive case). From $p^+$ and $p^-$ one can identify the form of the generators $\mathfrak{A}^+$ and $\mathfrak{A}^-$ that gives the equations governing the evolution of $\mathds{E}^{(x,0)}u(A_{L_t})$ and $\mathds{E}^{(x,0)}u(A_{L_t-})$, respectively. Regularity and well-posedness of the problems is an issue that must be solved case by case, since a general theory of harmonic functions for generators of Feller processes is still not at hand.
Here we mention the developed theory of harmonic problems for unimodal L\'evy processes \cite{bogdan_extension}, in the pointwise, but also in the distributional and the Dirichlet forms settings. However, our process $(A_u,\sigma_u)_u$ is not symmetric, let alone unimodal L\'evy. Moreover, it is known that in some quite reasonable cases, see \cite[p. 150]{BS}, the harmonic function (i.e. the candidate for a solution) may lack sufficient regularity to apply the generator on it pointwisely. In non-pointwise settings, we mention also \cite{Chen09} and \cite{MZZ10} where the harmonic problem was studied from the Dirichlet forms point of view in great generality in symmetric and non-symmetric settings, respectively.

 In the forthcoming examples, we show this method in action in the cases where the form of the operator and the regularity are already known: our approach indeed provides a probabilistic unifying approach to understanding the governing equations for (O)CTRWs limit processes in all cases.

\subsection{Uncoupled CTRWs and fractional kinetic equations}
In the uncoupled case, i.e., when the waiting times and the jumps are independent, the limit of the CTRW and the OCTRW coincide. This can be seen as follows. If the coordinates of the Feller process $(A_u, \sigma_u)_u$, with generator \eqref{genctrwlim}, are independent then $A_u$ and $\sigma_u$ are independent Feller processes on $\mathbb{R}^d$ and $\mathbb{R}$, respectively. Denote their generator by $(G, \mathcal{D}(G))$ (for the process $A$) and by $(S, \mathcal{D}(S))$ (for the process $\sigma$). Then the generator \eqref{genctrwlim} becomes
\begin{align}
    \mathcal{A}f(x,t) \, = \, Gf(x,t) +Sf(x,t)
\end{align}
where $G$ and $S$ act upon different variables. In particular, we can obtain from \eqref{genctrwlim} the Courr\`ege-type form
\begin{align}
    \mathcal{A}f(x,t) \, = \, &\sum_{i=1}^db_i(x)\partial_{x_i}f(x,t) + \frac{1}{2} \sum_{1 \leq i, j \leq d} a_{ij}(x) \partial_{x_ix_j}^2f(x,t) \notag \\
    & + \int_{\mathbb{R}^d}  \left( f(x+y,t) -f(x,t) - \sum_{i=1}^d y_i \mathds{1}_{[y \in [-1,1]^d]} \partial_{x_i}f(x,t)  \right) K_G(x,dy)\notag \\
    &+\gamma(t) f(x,t) \partial_t f(x,t) + \int_0^{+\infty} \left( f(x,t+w) -f(x,t) \right) \nu(t,dw)
    \label{1348}
\end{align}
where $K_G$ and $\nu$ are the jump kernels of the two processes $A$ and $\sigma$, respectively. Comparing \eqref{genctrwlim} with \eqref{1348} one can see that the jump kernel of $(A_u, \sigma_u)$ is
\begin{align}
    K(x,t;dy,dw) \, = \, K_G(x, dy) \delta_0(dw) + \delta_0(dy)\nu(t, dw)
\end{align}
which means that $K(x,t;dy,dw)$ is supported on $ \mathbb{R}^d \times \{0 \}  \cup \{ 0 \} \times (0, +\infty)$, i.e., the process $A_u$ and $\sigma_u$ do not jump simultaneously. It follows that $A_{L_t-}=A_{L_t}$, a.s. (see also \cite[Lemma 3.9]{straka2011lagging}).

The prototype of this theory is the case when $(\sigma_u)_{u \geq 0}$, is a strictly increasing subordinator with the Laplace exponent given by some Bernstein function $\phi$, see \eqref{reprphi}, with $\nu(0, +\infty) = +\infty$, i.e., the subordinator is not a compound Poisson process. In this case the operator \eqref{1348} reduces to
\begin{align}
      \mathcal{A}f(x,t) \, = \, &\sum_{i=1}^db_i(x)\partial_{x_i}f(x,t) + \frac{1}{2} \sum_{1 \leq i, j \leq d} a_{ij}(x) \partial_{x_ix_j}^2f(x,t) \notag \\
    & + \int_{\mathbb{R}^d}  \left( f(x+y,t) -f(x,t) - \sum_{i=1}^d y_i \mathds{1}_{[y \in [-1,1]^d]} \partial_{x_i}f(x,t)  \right) K_G(x,dy)\notag \\
    &+b f(x,t) \partial_t f(x,t) + \int_0^{+\infty} \left( f(x,t+w) -f(x,t) \right) \nu(dw)
    \label{1753}
\end{align}
where $b \geq 0$ and $\nu(\cdot)$ come from the representation of $\phi$ in \eqref{reprphi}. 

The transition probabilities $p^+$ and $p^-$ defined in \eqref{transitionover} and \eqref{transitionunder} become the same, since $A_{L_t}=A_{L_t-}$, a.s. Therefore, in order to study the evolution of the process $A_{L_t}$, $t \geq 0$, it is convenient in this case to look at the transition probabilities \eqref{transitionover-NO2} only. We get
\begin{align}
    p_u^+(x,t;dy,dw) \coloneqq  \mathds{P}_A^{x} \left( A_u \in dy \right) \mathds{P}_\sigma\big( (t- \sigma_u) \vee 0 \in dw  \big) 
    \label{57}
\end{align}
where $\mathds{P}_A$ and $\mathds{P_\sigma}$ denote the canonical measures on $A$ and $\sigma$.
The generator of the process with transition probabilities \eqref{57} is, therefore
\begin{align}
   \mathfrak{A} q(x,t) \, = \, &  Gq(x,t)\notag \\
    &-b \partial_t q(x,t) - \int_0^{+\infty} \left( q(x,t-w) \mathds{1}_{[w < t]} + q(x,0) \mathds{1}_{[w\geq t]} -q(x,t) \right) \nu(dw).
\end{align}
The corresponding harmonic problem  \eqref{1407-a} reduces, to the non-local in time fractional kinetic case studied, for example, in \cite{kolokoltsov2009generalized} and whose solution is, indeed $q(x,t) = \mathds{E}^x u(A_{L_t})$, for $u \in \text{Dom}(G)$. Rearranging the operator acting on the time variable $t>0$, one gets different forms of fractional-type operators that have been studied, for example, in \cite{chen, hernandez2017, kochubei}.

\subsection{Space-dependent CTRWs and variable-order fractional kinetic equations}
It is possible that the waiting times of a CTRW depends on the position of the walker. Such construction is made explicit, for example, in \cite{Kolokoltsov2023variable, STRAKA2018451}. In this case, the limit process is Markov additive, with the generator given by
    \begin{align}
      \mathcal{A}f(x,t) \, = \, &Gf(x,t) \notag \\
    &+b(x) f(x,t) \partial_t f(x,t) + \int_0^{+\infty} \left( f(x,t+w) -f(x,t) \right) \nu(x,dw)
    \label{59}
    \end{align}
where the operator $(G, \mathcal{D}(G))$ acts on the $x$-variable. In particular, when $G$ has a Courr\`ege-type form and under appropriate technical assumptions on the coefficients, one has that \eqref{59} generates a Markov additive process (see \cite[Section 4]{Meerschaert2014} or \cite[Section 2]{savtoa}). It follows that we can resort to the simplified transition probabilities \eqref{transitionover-NO2} and \eqref{transitionunder-NO4} to determine the Markov processes and the harmonic problems \eqref{1407-a} and \eqref{1407-aa} describing the evolution of $A_{L_t-}$, $A_{L_t}$. In particular, if we assume \eqref{59} with $G$ having a Courr\`ege-type form (as in \cite[Section 3]{savtoa}) we have that the processes $A_t$ and $\sigma_t$ do not jump simultaneously, a.s., and thus $A_{L_t}=A_{L_t-}$, a.s. It follows that we can look, again, at the transition probabilities \eqref{transitionover-NO2} only so
\begin{align}
    p^+_u(s,t;dy,dw) \, = \, \mathds{P}^{(x,0)} \left( A_u \in dy, (t-\sigma_u) \vee 0 \in dw \right).
\end{align}
Therefore the operator $\mathfrak{A}^+$ appearing in \eqref{1407-a} can be determined starting from \eqref{59} just by observing that the second coordinate is reversed and stopped when it crosses zero);  then we have that
\begin{align}
    \mathfrak{A}^+f(x,t) \, = \, &Gf(x,t) -b(x) f(x,t) \partial_tf(x,t)\notag \\
    &  -\int_0^{+\infty} (f(x,t-w) \mathds{1}_{[w<t]} +f(x,0) \mathds{1}_{[w \geq t]} -f(x,t) ) \nu(x, dw).
    \label{1908}
\end{align}
Setting in \eqref{1908} $b(x)=0$, $\nu(x,dw) =dw \alpha(x) w^{-\alpha(x)-1}/\Gamma(1-\alpha(x))$ one gets the variable order equation studied in \cite{Kolokoltsov2023variable}. Instead, rearranging \eqref{1908} as follows
\begin{align}
    &-\int_0^{+\infty} (f(x,t-w) \mathds{1}_{[w<t]} +f(x,0) \mathds{1}_{[w \geq t]} -f(x,t) ) \nu(x, dw) \notag \\
    = \, & -\frac{\partial}{\partial t} \int_0^t f(x,w) \nu((t-w, +\infty),x) \, dw \, + \,  f(x,0) \, \nu((t, +\infty),x)
\end{align}
one gets the variable order fractional-type equations considered, for example, in \cite{savtoa} (see also \cite{KIAN20181146, kian2018time} for the fractional diffusion case).

\subsection{The undershooted process and coupled equations}
\label{exunder}
In Example \ref{examplecoupled} we provided an easy and explicit (O)CTRW that converges to the process $B_{\sigma_{L_t-}}$ ($B_{\sigma_{L_t}}$). It is clear that the approach can be made more general such that instead of assuming \eqref{jointgausmittag}, we assume that following joint density for $(W_n, J_n)$
\begin{align}
    f_{J,W} (x,s) \, = \, f_s(x) e_\alpha (s) ,
\end{align}
where $f_s(x)$ is the density function of some L\'evy process in $\mathbb{R}^d$ with the symbol $\varphi(\xi)$, $\xi \in \mathbb{R}^d$, and then repeat the same arguments as in \eqref{forosmall} to get the Fourier--Laplace symbol $(\lambda + \varphi(\xi))^\alpha$. This is associated to the process $(M_{\sigma_t}, \sigma_t)_t$ when $\sigma_t$ is a stable subordinator independent from a L\'evy process $M$ having the symbol $\varphi(\xi)$.
The theory of the governing harmonic problem for the overshooted process $M_{\sigma_{L_t}}$ is covered by our general theorems in Section \ref{sectionmain}, even in the case when $M$ is not just a L\'evy but a general Feller process. In the following, we discuss the corresponding harmonic problem for the undershooted process $M_{\sigma_{L_t-}}$ which was already done in \cite{ascione2024} under the same assumptions on $M$ and $\sigma$ as in Section \ref{sectionmain}, but we apply our approach from Section \ref{sec:harmonic} to obtain the operator.

% In order to use the approach in Section \ref{ss:additive} we need to determine the form of the generator $\mathfrak{A}^-$ appearing in \eqref{1407-aa} corresponding to the transition probabilities \eqref{transitionunder-NO2}. 
In this setting, we are in Markov additive case so we use \eqref{1550} to understand a formal representation of the generator $\mathfrak{A}^-$.
From \eqref{1550}, $\mathfrak{A}^-$ must generate a process as follows. The first term in \eqref{1550}, in this case, corresponds to the subordinate Feller process $M_{\sigma_u-\sigma_0}$ in the first coordinate, and $-\sigma_u$ in the second coordinate, but only if $-\sigma_u$ did not go below level 0 (or in other words if $\sigma_u$ did not go over level $t$). This evolution is then just the subordinated evolution $\overline P_s f(x,t)\coloneqq P_sf(x,t-s) \mathds{1}_{[s < t]}$. On the other hand, the second term in \eqref{1550} corresponds to the event when the process $-\sigma_u$ has crossed the level 0, and in that case we have stopped the second coordinate in 0, while the first coordinate is again the subordinate Feller process $M_{\sigma_u-\sigma_0}$ but stopped in its position before the jump of $-\sigma_u$ below level 0. Since $M_\sigma$ and $\sigma$ have simultaneous jumps, this position is $M_{\sigma_{L_t-}}$. This induces the following representation for $\mathfrak{A}^-$ (under the assumption that $\sigma$ does not have drift as in \cite{ascione2024} and \ref{assphi}), for a suitable function $f$, which is a modification of \eqref{gen-final} that takes into account this stopping:
\begin{align}
    \mathfrak{A}^-f(x,t) \, = \,  \int_0^{+\infty} \left( P_sf(x,t-s) \mathds{1}_{[s < t]} + \underbrace{f(x, 0) \mathds{1}_{[s \geq t]}}_{\text{stopping event}} -f(x,t)\right) \nu(ds).
    \label{genunder}
\end{align}
Rearranging \eqref{genunder} we get
\begin{align}
     \mathfrak{A}^-f(x,t) \, = \,  \int_0^{+\infty} \left( P_sf(x,t-s) \mathds{1}_{[s < t]}  -f(x,t)\right) \nu(ds) + f(x,0) \bar{\nu}(t).
     \label{phiddt-g}
\end{align}

The harmonic problem for the operator having the form \eqref{phiddt-g} has been addressed in \cite[Theorem 5.2]{ascione2024} where it is proved that, under the same assumption of our Theorem \ref{thm:operator}, the unique solution to
\begin{align}
\begin{cases}
    \mathfrak{A}^- q(x,t) = 0, \qquad &(x,t) \in \mathbb{R}^d \times (0, +\infty) \\
    q(x,t) =u(x), & (x,t) \in \mathbb{R}^d \times \{ 0 \},
    \end{cases}
\end{align}
is given by
\begin{align}
    q(x,t) \, = \, \mathds{E}^{(x,0)} u(M_{\sigma_{L_t-}}).
\end{align}

\section*{Acknowledgements}
The authors acknowledge financial support under the National Recovery and Resilience Plan (NRRP), Mission 4, Component 2, Investment 1.1, Call for tender No. 104 published on 2.2.2022 by the Italian Ministry of University and Research (MUR), funded by the European Union – NextGenerationEU– Project Title “Non–Markovian Dynamics and Non-local Equations” – 202277N5H9 - CUP: D53D23005670006 - Grant Assignment Decree No. 973 adopted on June 30, 2023, by the Italian Ministry of Ministry of University and Research (MUR).

The author Bruno Toaldo would like to thank the Isaac Newton Institute for Mathematical Sciences, Cambridge, for support and hospitality during the programme Stochastic Systems for Anomalous Diffusion, where work on this paper was undertaken. This work was supported by EPSRC grant EP/Z000580/1.

\appendix

\section{Proofs of auxiliary results}
\label{auxiliary}
\subsection{On the core of (subordinated) Feller processes}
\begin{lemma}\label{app:core}
Let $(M_u)_{u\ge0}$ be a Feller process on locally compact separable metric space $E$, with a generator $(G,\DD(G))$, and $(M^\phi_u)_{u\ge0}$ a subordinated Feller process $(M_u)_{u\ge0}$ by a subordinator $(\sigma_u)_u$ with the Laplace exponent $\phi$, and $(G^\phi,\DD(G^\phi))$ its generator.
    \begin{enumerate}[label=(\alph*)]  
        \item\label{a}If $\mathcal{C}$ is an operator core for $(G,\DD(G))$, then it is also for $(G^\phi,\DD(G^\phi))$.
        \item \label{b}
        If $(\Gamma_u)_u$ is a simple translation in $\R$, then $(M_u,\Gamma_u)_{u\ge0}$ is a Feller process in $E\times \R$. In the case $E=\R^d$, if $C_c^\infty(\R^d)$ is an operator core for $(G,\DD(G))$, then $C_c^\infty(\R^{d})\times C_c^\infty(\R)$ is an operator core for the generator of $(M_u,\Gamma_u)_{u\ge0}$.
    \end{enumerate}
\end{lemma}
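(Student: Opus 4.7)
Recall that $\mathcal{C}\subset \DD(G^\phi)$ is a core if and only if it is dense in $\DD(G^\phi)$ with respect to the graph norm $\|f\|_{G^\phi}:=\|f\|_\infty+\|G^\phi f\|_\infty$. My strategy is to chain two density statements together with a continuity of inclusion: (1) $\mathcal{C}$ is $\|\cdot\|_G$-dense in $\DD(G)$ by hypothesis; (2) the identity map $(\DD(G),\|\cdot\|_G)\hookrightarrow (\DD(G^\phi),\|\cdot\|_{G^\phi})$ is continuous; (3) $\DD(G)$ is $\|\cdot\|_{G^\phi}$-dense in $\DD(G^\phi)$. Step (2) follows from Phillips' theorem, which gives $G^\phi f = bGf + \int_0^\infty (P_sf - f)\nu(ds)$ for $f\in \DD(G)$, combined with the estimate $\|P_s f - f\|_\infty\le \min(2\|f\|_\infty, s\|Gf\|_\infty)$ (see \cite[Eq.~(13.3)]{bernstein}) and the integrability $\int_0^\infty (1\wedge s)\nu(ds)<\infty$, yielding a bound of the form $\|G^\phi f\|_\infty \le C(\|f\|_\infty + \|Gf\|_\infty)$.

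For the density in step (3), for $f\in \DD(G^\phi)$ I set $f_\lambda := \lambda R(\lambda,G) f$, where $R(\lambda,G)=\int_0^\infty e^{-\lambda u}P_u\,du$. Then $f_\lambda\in \DD(G)$ and $f_\lambda\to f$ in $C_0(E)$ as $\lambda\to+\infty$ by the standard Yosida approximation. The crucial point is that $R(\lambda,G)$ and $G^\phi$ commute on $\DD(G^\phi)$: the semigroups $(P_u)_{u\ge 0}$ and $(P_u^\phi)_{u\ge 0}$ commute because they arise from Bochner's functional calculus on the same underlying generator, and commutation propagates to the resolvent and to $G^\phi$. Hence $G^\phi f_\lambda = \lambda R(\lambda,G) G^\phi f \to G^\phi f$, so $f_\lambda\to f$ in the $\|\cdot\|_{G^\phi}$-graph norm. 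Transitivity of density then yields that $\mathcal{C}$ is $\|\cdot\|_{G^\phi}$-dense in $\DD(G^\phi)$.

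\textbf{Plan for part (b).} The Feller property of $(M_u,\Gamma_u)_{u\ge 0}$ reduces to verifying that $T_u h(x,t) := \ex^x[h(M_u,t+u)]$ preserves $C_0(E\times \R)$ and is strongly continuous; the former uses the Feller property of $M$ in the $x$-variable together with decay of $h$ in the $t$-variable, and the latter uses uniform continuity of $h$ on compact subsets together with strong continuity of $(P_u)_{u\ge 0}$. For the core claim in the case $E=\R^d$, I observe that the semigroup factorizes as $T_u = P_u\circ S_u$, where $S_u g(t)=g(t+u)$ is the translation semigroup on $C_0(\R)$ (whose classical core is $C_c^\infty(\R)$), and the two factors commute because they act on independent coordinates. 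Elements of $C_c^\infty(\R^d)\otimes C_c^\infty(\R)$ evidently lie in $\DD(G+\partial_t)$ with $(G+\partial_t)(f\otimes g)(x,t)=(Gf)(x)g(t)+f(x)g'(t)$; density in the graph norm follows by applying the argument of (a) separately to each variable: for any $h\in \DD(G+\partial_t)$, the double Yosida regularization $\lambda^2 R(\lambda,G)R(\lambda,\partial_t)h$ lies in $\DD(G)\otimes \DD(\partial_t)$ and converges in graph norm, and the resulting limit can be further approximated in graph norm by smooth compactly-supported tensors through truncation and mollification together with Stone--Weierstrass in the joint variable.

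\textbf{Main obstacle.} In (a) the only delicate point is the commutation of $R(\lambda,G)$ with $G^\phi$, which is immediate from Bochner's construction. The real technical difficulty lies in (b): the set $C_c^\infty(\R^d)\otimes C_c^\infty(\R)$ is \emph{not} $T_u$-invariant in general, since $P_u$ does not preserve compact support, so the standard Dynkin invariance core criterion does not apply. The cleanest workaround is the resolvent criterion combined with the factorization into commuting semigroups, as sketched above; alternatively, one can invoke a general tensor-product-core theorem for strongly continuous semigroups with commuting factors.
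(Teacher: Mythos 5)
Part (a) is correct and takes essentially the same route as the paper except in one place: the paper cites Phillips' theorem \cite[Theorem 13.6]{bernstein} not only for the representation formula but also for the fact that $\DD(G)$ is already a core for $(G^\phi,\DD(G^\phi))$, whereas you re-derive this latter fact via a Yosida approximation $f_\lambda=\lambda R(\lambda,G)f$ together with the (correct) observation that $R(\lambda,G)$ commutes with $G^\phi$ through Bochner's functional calculus. Both routes are sound; yours spells out a step the paper outsources to a reference, and the paper's two-epsilon chaining with the estimate $\|G^\phi g\|_\infty \le C(\phi)(\|g\|_\infty+\|Gg\|_\infty)$ on $\DD(G)$ is the same continuity-of-inclusion idea you describe in your step (2).

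Part (b) contains a genuine gap. You correctly observe that $C_c^\infty(\R^d)\otimes C_c^\infty(\R)$ is not $\wt P_u$-invariant, so the Dynkin core criterion cannot be applied to it directly. But your proposed repair breaks down at the step where you assert that the double regularization $\lambda^2 R(\lambda,G)R(\lambda,\partial_t)h$ ``lies in $\DD(G)\otimes\DD(\partial_t)$.'' This is false in general: $R(\lambda,G)R(\lambda,\partial_t)h$ is a function $(x,t)\mapsto\int_0^\infty\!\!\int_0^\infty e^{-\lambda(u+v)}\,\ex^x\!\bigl[h(M_u,t+v)\bigr]\,du\,dv$, which is separately smooth (in the domain sense) in $x$ and in $t$, but is not a finite sum of products $f_k(x)g_k(t)$, which is what membership in the algebraic tensor product $\DD(G)\otimes\DD(\partial_t)$ requires. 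Since $G$ is an abstract Feller generator (not a local differential operator), one cannot then approximate $Gh$ by $\sum_k (Gf_k)g_k$ via ``truncation and mollification'' as you suggest; there is no obvious uniform control of $G$ applied to a tensor approximant in terms of uniform closeness of the approximant to $h$.

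The paper avoids this by inserting an intermediate \emph{invariant} core: the larger space $\mathcal{D}_0:=\textrm{span}\{f g : f\in\DD(G),\ g\in\DD(\partial_t)\}$ \emph{is} $\wt P_u$-invariant (because $P_u$ preserves $\DD(G)$ and translation preserves $\DD(\partial_t)=C_0^1(\R)$), it sits inside $\DD(G+\partial_t)$, and it is dense in $C_0(\R^{d+1})$ by Stone--Weierstrass; hence $\mathcal{D}_0$ is a core by the invariance criterion \cite[Theorem 1.3.18]{applebaum-semigroups}. Once you know $\mathcal{D}_0$ is a core, the passage to $C_c^\infty(\R^d)\otimes C_c^\infty(\R)$ is painless: you only need to approximate a \emph{finite sum of tensors} $\sum_k f_k g_k$ in graph norm, which reduces to approximating each factor in its own graph norm, exactly as in part (a). In short, the missing idea in your argument is this intermediate tensor-span core; the double Yosida regularization lands in $\DD(G+\partial_t)$ but not in the tensor span, so it does not do the job you assign it.
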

\begin{proof}
        First we prove part \ref{a}. Let $\phi(\lambda)=b\lambda+\int_0^\infty (1-e^{-\lambda s})\nu(ds)$. By Phillips' theorem \cite[Theorem 13.6]{bernstein}, the  generator $(G^\phi,\DD(G^\phi))$ has a core $\DD(G)$ and it holds
        \begin{align}\label{1138}
            G^\phi f=b\,Gf+\int_0^\infty (P_sf-f)\nu(ds),\quad f\in \DD(G),
        \end{align}
        where $(P_t)_{t\ge0}$ is the semigroup of $(M_u)_{u\ge0}$.
        
        Take now $\varepsilon>0$ and $f\in \DD(G^\phi)$. By the definition of the core, there exists $f_\varepsilon\in \DD(G)$ such that $\|f-f_\varepsilon\|_\infty+\|G^\phi f-G^\phi f_\varepsilon\|_\infty<\varepsilon$. However, $\mathcal{C}$ is a core for $\DD(G)$, so there exists $\overline f_\varepsilon\in \mathcal{C}$ such that $\|f_\varepsilon-\overline f_\varepsilon\|_\infty+\|G f_\varepsilon-G \overline f_\varepsilon\|_\infty<\varepsilon$. Hence,
        \begin{align}
            \|f-\overline f_\varepsilon\|_\infty+\|G^\phi f-G^\phi \overline f_\varepsilon\|_\infty&\le \|f-f_\varepsilon\|_\infty+\|G^\phi f-G^\phi f_\varepsilon\|_\infty\\
            &\quad+\|f_\varepsilon-\overline f_\varepsilon\|_\infty+\|G^\phi f_\varepsilon-G^\phi \overline f_\varepsilon\|_\infty\\
            &\le 2\varepsilon+\|G^\phi f_\varepsilon-G^\phi \overline f_\varepsilon\|_\infty.\label{1445-a}
        \end{align}
        However, by applying \cite[Eq. (13.3)]{bernstein} and \eqref{1138}, we get
        \begin{align}
            \|G^\phi f_\varepsilon-G^\phi \overline f_\varepsilon\|_\infty&\le b\|G f_\varepsilon-G \overline f_\varepsilon\|_\infty\\
            &\qquad+\int_0^\infty\min\{s \|G f_\varepsilon-G \overline f_\varepsilon\|_\infty,\| f_\varepsilon- \overline f_\varepsilon\|_\infty\}\nu(ds)\\
            &\le C(\phi)\varepsilon.\label{1445-b}
        \end{align}
        By collecting \eqref{1445-a} and \eqref{1445-b}, we get that $\mathcal{C}$ is a core for $(G^\phi,\DD(G^\phi))$.

        Now we move to the proof of the part \ref{b}. The semigroup of $(M_u,\Gamma_u)_{u\ge0}$ is given by
        \begin{align}
            \wt P_uf(x,t)\coloneqq P_uf(\cdot,t+u)(x)=\ex^x(f(M_u,t+u)),
            \label{Ptilda}
        \end{align}
        and it is easy to show by definition that it maps $C_0(E\times\R)$ to $C_0(E\times\R)$, and that it is strongly continuous in $u=0$, i.e. that it is a Feller semigroup.

        The core part, under the additional assumption $E=\R^d$, comes essentially from \cite[Theorem 4.1]{hernandez2017}. Denote by $(G+\partial_t)$ the infinitesimal generator of $(\wt P_u)_u$, and assume that $C_c^\infty(\R^d)$ is a core for $(G,\DD(G))$. Since $\DD(\partial_t)=C^1_0(\R^d)$, the space of differentiable functions vanishing at infinity (as well as their derivative), we have that $C_c^\infty(\R^d)\times C_c^\infty(\R)\subset \textrm{span}\{fg:f\in \DD(G),g\in \DD(\partial_t)\}\subset \DD(G+\partial_t)$. Since $C_c^\infty(\R^d)\times C_c^\infty(\R)$ forms a subalgebra, by Stone-Weierstrass theorem \cite[Theorem A.0.5]{applebaum-semigroups}, both $C_c^\infty(\R^d)\times C_c^\infty(\R)$ and $\textrm{span}\{fg:f\in \DD(G),g\in \DD(\partial_t)\}$ are dense in $C_0(\R^{d+1})$ and hence in $\DD(G+\partial_t)$. Moreover, $\textrm{span}\{fg:f\in \DD(G),g\in \DD(\partial_t)\}$ is invariant under $\wt P_u$ so \cite[Theorem 1.3.18]{applebaum-semigroups} implies that $\textrm{span}\{fg:f\in \DD(G),g\in \DD(\partial_t)\}$ is a core for $\DD(G+\partial_t)$. Furthermore, since $C_c^\infty(\R^d)$ is a core for $(G,\DD(G))$ and $C_c^\infty(\R)$ is a core for $(\partial_t,\DD(\partial_t))$, by similar approximation technique as in part \ref{a}, we get that $C_c^\infty(\R^d)\times C_c^\infty(\R)$ is a core for $\DD(G+\partial_t)$.
\end{proof}
\subsection{Proof of Lemma \ref{l:aux-G-q(t)}}
Proof of \ref{l-item-1}: Recall that we can rewrite $q(x,t)$ as 
\begin{align}
q(x,t)=\int_{[t,\infty)} P_su(x) \, \pr^{(x,0)}(D_t\in ds) \, = \, \int_{(t,\infty)} P_su(x) \, \pr^{(x,0)}(D_t\in ds)
\end{align}
where the last equality follows since $\pr^{(x,0)}(D_t=t)=0$ (see \eqref{eq:density-D} and the discussion there).
So
	\begin{align}\label{eq:Gq-exists}
		\frac{P_hq(x,t)-q(x,t)}{h}=\int_t^\infty \frac{1}{h}\left(P_{s+h}u(x)-P_{s}u(x)\right)\pr^{(x,0)}(D_t\in ds),
	\end{align}
    where we used \cite[Proposition 1.6.2]{abhn} to move $P_h$ inside the integral.
	Note that since $u\in \DD(G)$, we have that $Gu\in C_0(E)$ and $P_sGu=GP_su$. Furthermore, we have that 
    \begin{align}
        \left(P_{s+h}u(x)-P_{s}u(x)\right)/h\to P_sGu(x), 
    \end{align}
    uniformly in $x$, and by the well-known equality
    \begin{align}
        P_{s+h}u-P_su \, = \, \int_0^h GP_{s+w}u \, dw
    \end{align}
     we can see that the term inside the integral in \eqref{eq:Gq-exists} is bounded, for $u \in \mathcal{D}(G)$, by $\|P_sGu\|\le \|Gu\|$. Since this bound is uniform in $h$, by the dominated convergence theorem, we obtain
	\begin{align}
		Gq(x,t)=\int_t^\infty P_sGu(x)\pr^{(x,0)}(D_t\in ds)=\ex^{(x,0)}[Gu(Y_t)].
        \label{a11}
	\end{align}
    The claimed inequality on the norms follows directly from \eqref{a11}.
	
	Proof of \ref{l-item-2}: by \eqref{eq:Laplace-q} we have that
   \begin{align}
        \wt q(x,\lambda)=\int_0^\infty \int_0^\infty e^{-\lambda w}P_{s+w}u(x) \left(\frac{1-e^{-\lambda s}}{\lambda}\right)\nu(ds)U(dw).
    \end{align}
   In the same spirit of Item \ref{l-item-1} we obtain
   \begin{align}
       \frac{P_h \widetilde q(x,\lambda) - \widetilde q(x,\lambda)}{h} \, = \, \int_0^\infty \int_0^\infty e^{-\lambda w}\frac{P_h-\mathds{I}}{h}P_{s+w}u(x) \left(\frac{1-e^{-\lambda s}}{\lambda}\right)\nu(ds)U(dw)
   \end{align}
   and the function inside the integral is, again, bounded uniformly in $h$. The bound is now
   \begin{align}
   \lambda^{-1} \left( 1-e^{-\lambda s} \right) e^{-\lambda w}   \left\| Gu \right\|_\infty
   \end{align}
   which is uniform in $h$ and integrable against $\nu(ds) U(dw)$. Hence we apply again the dominated convergence theorem to get that   
   $\wt q(\cdot,\lambda)\in \DD(G)$ and $G \wt q(x,\lambda)=\wt{Gq(x,\cdot)}(\lambda)$.

	Proof of \ref{l-item-3}.  The proof can be done in a similar way as in \cite[Proposition 5.4]{ascione2024} so we repeat only the crucial steps and emphasize the differences. Recall the semigroup \eqref{Ptilda} of the process $(M_u, \Gamma_u)_{u \geq 0}$, i.e., $(\wt P_t,t\ge0)$ on $C_0 (E\times \R)$ and $(G + \partial_t)$ be the corresponding infinitesimal generator (see Lemma \ref{app:core}, Item \ref{b}). Consider the sequence $\eta_N\in C_c^\infty(\R)$ such that $0\le \eta_N\le 1$, $\eta_N\equiv1$ in $[-N,N]$, $\eta_N\equiv 0$ in $(-N-1,N+1)^c$, and such that $\|\eta_N\|_{C^2(\R)}=C_\eta$ is independent of $N\in \N$; then $u_N(x,t)=u(x)\eta_N(t)\in \DD(G + \partial_t)$.
	
	Next, recall by \eqref{1640} the generator $\mathcal{A}$ of the corresponding subordinate semigroup in the sense of Bochner, with the Bernstein function $\phi$. Obviously, $L_t$ is the stopping time for the natural filtration generated by the corresponding process $(M_{\sigma^0},\sigma)$, since it is hitting time of $(t,\infty)$ for $\sigma$. 
	
	Define
	\begin{align}
		q_N(t,x,s)\coloneqq \ex^{(x,s)}[u_N(Y_t,D_t)]=\ex^{(x,s)}[u_N(M_{\sigma^0_{L_t}},\sigma_{L_t})],
	\end{align}
	which by Dynkin's formula, see \cite[Eq. (1.55)]{LevyMatters3}, satisfies
	\begin{align}
		q_N(t,x,s)=u_N(x,s)+\ex^{(x,s)}\left[\int_0^{L_t} \mathcal{A} u_N(M_{\sigma^0_h},\sigma_h)dh\right].
	\end{align}
	Fix now $T>1$ and for $h\in (-1,1)\setminus\{0\}$ and $\varphi:(-1,T+1)$ define
	\begin{align}
		D^h\varphi(t)=\frac{\varphi(t+h)-\varphi(t)}{h}.
	\end{align}
	Then it holds by the same calculations as in \cite{ascione2024} (see, in particular, \cite[p. 17-18]{ascione2024}) that
	\begin{align}
		|D^hq_N(t,x,0)|&\le C_\phi\big(\|Gu\|_\infty+\|u\|_\infty\big) \times\\
        &\hspace{4em}\times \begin{cases}
            u^\phi(t),&h>0,\\
            \frac{U(t)-U(t+h)}{|h|}
            ,&h<0,\,t+h>0,\\
            \frac{U(t)}{|h|}, &h<0,\, t+h\le 0,
        \end{cases}\label{eq:1833}\\
        &\le 2C_\phi\big(\|Gu\|_\infty+\|u\|_\infty\big)\left(u^\phi(t)+u^\phi(t/2)+\frac{U(t)}{t}\right)
	\end{align}
	By taking $N\to \infty$ we obtain the same bounds for $D^hq(t,x)$, using dominated convegence theorem.
	Take now an arbitrary sequence $h_n\to 0$. Denote by $g_n(t)=D^{h_n}q(t,x)$. We proceed by showing that $g_n(t)$ has a subsequence that converges weakly in $L^1(0,T)$ (i.e. with respect to all $L^\infty(0,T)$ functions), and that the limit is $\partial_t g(t,x)$. This can be done in the very same way as \cite[top of page 18]{ascione2024} proving that the sequence $\{g_n:n\in \N\}$ is uniformly integrable and uniformly bounded in $L^1(0,T)$. It is in the previous step that we use the assumption on the log-integrability of $u^\phi$. It follows by the Dunford-Pettis theorem \cite[Theorem 4.30]{Brezis-SS+PDE} that there is a function $g\in L^1(0,T)$ and a subsequence of $(h_n)_n$, the we denote again by $(h_n)_n$, such that $g_n\rightharpoonup g$ in the weak topology of $L^1(0,T)$, i.e. 
	\begin{align}\label{eq:weak-conv-q-1}
		\lim_{n\to\infty}\int_0^T g_n(t) \varphi(t)dt= \int_0^T g(t) \varphi(t)dt,\quad \varphi\in L^\infty(0,T).
	\end{align}
	In particular, for $\varphi\in C_c^\infty(0,T)$ we get
	\begin{align}
		\int_0^Tg_n(t) \varphi(t)dt&=\int_0^{T+1} \frac{q(x,t+h_n)-q(x,t)}{h_n} \varphi(t)dt\nonumber\\
		&=-\int_0^{T+1} q(x,t)D^{h_n}\varphi(t)dt\to -\int_0^{T+1} g(t)\partial_t\varphi(t)dt\nonumber\\
		&=-\int_0^{T} g(t)\partial_t\varphi(t)dt,\label{eq:weak-conv-q-2}
	\end{align}
	as $n\to\infty$, where the convergence is obtained by the dominated convergence theorem. By comparing \eqref{eq:weak-conv-q-1} and \eqref{eq:weak-conv-q-2}, we see that $g=\partial_t q(x,t)$ in the distributional sense. In particular, $q$ is in the Sobolev space $W^{1,1}(0,T)$, hence there is an absolutely continuous version of $q$ with the a.e. derivative $g$, see \cite[Chapter 5]{evans_pde}. However, $t\mapsto q(x,t)$ is continuous by the dominated convergence theorem since $u\in C_0(E)$, so there is no other absolutely continuous version of it. Hence, $g(t)=\partial_t q(t,x)$ for a.e. $t\in (0,T)$. Since $T$ was arbitrary, we can repeat this procedure for every $T>1$ to get the existence of $\partial_tq(t,x)$ in $(0,\infty)$.
		To finish the proof, note that we are now allowed to take the limit in \eqref{eq:1833} with $h\to 0$ (and considering $q$ instead of $q_N$) to get the bound $|\partial_t q(t,x)|\le C_\phi\big(\|Gu\|_\infty+\|u\|_\infty\big) u^\phi(t)$ for a.e. $t>0$.

\bibliographystyle{abbrv}
\bibliography{References}

\bigskip

{\bf Ivan Bio\v{c}i\'c}

Department of Mathematics, Faculty of Science, University of Zagreb, Zagreb, Croatia,

Department of Mathematics “Giuseppe Peano”, University of Turin, Turin, Italy,

Email: \texttt{ivan.biocic@unito.it}, \texttt{ivan.biocic@math.hr}

\bigskip
{\bf Bruno Toaldo}

Department of Mathematics “Giuseppe Peano”, University of Turin, Turin, Italy,

Email: \texttt{bruno.toaldo@unito.it}

\end{document}